\newtheorem{thrm}{Theorem}[section]
\newtheorem{lem}[thrm]{Lemma}
\newtheorem{prop}[thrm]{Proposition}
\newtheorem{cor}[thrm]{Corollary}
\theoremstyle{definition}
\newtheorem{definition}[thrm]{Definition}
\newtheorem{remark}[thrm]{Remark}
\newtheorem{fact}[thrm]{Fact}
\numberwithin{equation}{section}
\newcommand{\fgrade}{\operatorname{fgrade}}
\newcommand{\Spec}{\operatorname{Spec}}
\newcommand{\Ext}{\operatorname{Ext}}
\newcommand{\Supp}{\operatorname{Supp}}
\newcommand{\End}{\operatorname{End}}
\newcommand{\Char}{\operatorname{Char}}
\newcommand{\Hom}{\operatorname{Hom}}
\newcommand{\Fdepth}{\operatorname{F-depth}}
\newcommand{\Ann}{\operatorname{Ann}}
\newcommand{\depth}{\operatorname{depth}}
\newcommand{\vpl}{\operatornamewithlimits{\varprojlim}}
\newcommand{\vil}{\operatornamewithlimits{\varinjlim}}
\newcommand{\fm}{\frak{m}}
\newcommand{\fD}{\frak{D}}
\newcommand{\fn}{\frak{n}}
\author{ Majid Eghbali}
\thanks{This research was in part supported by a grant from IPM (No. 93130017)}
\address{School of Mathematics, Institute for Research in Fundamental Sciences (IPM), P. O. Box: 19395-5746,
Tehran-Iran.} \email{m.eghbali@yahoo.com}
\keywords{Frobenius depth, Positive characteristic methods, Local
cohomology, formal grade, depth, D-modules, Annihilator.}
\subjclass[2000]{13D45, 14B15.}
\begin{document}

\title[Frobenius map, D-modules and local cohomology]{A note on the use of Frobenius map and D-modules in local cohomology
}

\begin{abstract}
The Frobenius depth denoted by $\Fdepth$ defined by
Hartshorne-Speiser in 1977 and later by Lyubeznik in 2006, in a
different way, for rings of positive characteristic. The first aim
of the present paper is to compare the $\Fdepth$ with formal grade
and reprove some results of Lyubeznik using formal local cohomology.
Then the endomorphism rings of local cohomology modules will be
considered. As an application we reprove the results due to
Huneke-Koh in positive characteristic and Lyubeznik in
characteristic zero on the annihilators of local cohomology modules.
\end{abstract}
 \maketitle

\section{Introduction} \label{sect1}

Let $Y$ be a closed subscheme of $\mathbb{P}^n_k$, the projective
space over a field $k$ of characteristic $p>0$. Vanishing of
$H^i(\mathbb{P}^n-Y,\mathcal{F})$ for all coherent sheaves
$\mathcal{F}$ was asked by Grothendieck (\cite{Gr68}). Among the
attempts to answer the mentioned question Hartshorne and Speicer in
\cite{HS} used the notion of Frobenius depth of $Y$ to give an
essentially complete solution to this problem.

To be more precise, Let $Y$ be a Noetherian scheme of finite
dimension, whose local rings are all of characteristic $p>0$. Let $y
\in Y$ be a (not necessarily closed) point. Let $d(y)$ be the
dimension of the closure $\{y\}^-$ of the point $y$. Let
$\mathcal{O}_y$ be the local ring of $y$, let $k_0$ be its residue
field, let $k$ be a perfect closure of $k_0$, and let
$\widehat{\mathcal{O}}_y$, be the completion of $\mathcal{O}_y$.
Choose a field of representatives for $k_0$ in
$\widehat{\mathcal{O}}_y$. Then we can consider
$\widehat{\mathcal{O}}_y$ as a $k_0$-algebra, and we let $A_y$ be
the local ring $\widehat{\mathcal{O}}_y \otimes_{k_0} k$ obtained by
base extension to $k$. Let $Y_y=\Spec A_y$ and let $P$ denote its
closed point. So, the Frobenius depth of $Y$ denoted by $\Fdepth
Y$ is the largest integer $r$ (or $+\infty$) such that for all
points $y \in Y$, one has $H^i_P(Y_y,\mathcal{O}_{Y_y})_s=0$ (the
stable part of $H^i_P(Y_y,\mathcal{O}_{Y_y})$) for all $i < r-d(y)$.

 From the local
algebra point of view, Grothendieck's problem is stated to find
conditions under which  $H^i_I(M)=0$ for all $i>n\ (n \in
\mathbb{Z})$ and all $A$-modules $M$, where $A$ is a commutative
Noetherian local ring and $I \subset A$ is an ideal. For an
$A$-module $M$, we denote by $H^i_I(M)$ the $i$th local cohomology
module of $M$ with respect to $I$. For more details the reader may
consult \cite{Gr67} and \cite{Br-Sh}. From the celebrated result of
Hartshorne (cf. \cite[pp. 413]{Hart68}), it is enough to find
conditions for the vanishing of $H^i_I(A)$. In this direction, for a
local ring $(A,\fm)$, Lyubeznik in \cite{Lyu06} using the Frobenius
map from $ H^i_{\fm}(A)$ to itself defined the Frobenius depth of
$A$ denoted by $\Fdepth A$ as the smallest $i$ such that for every
iteration of Frobenius map, $H^i_{\fm}(A)$ does not go to zero. It
is noteworthy to say that the Lyubeznik's $\Fdepth$ coincides with
the notion of $\Fdepth$ defined by Hartshorne and Speiser, whenever
$A$ admits a surjection from a regular local ring and $Y=\Spec A$
(cf. \cite[Corollary 6.3]{Lyu06}).

Consider the family of local cohomology modules $\{H^i_{\fm}
(M/I^nM)\}_{n \in \mathbb{N}}$ where, $(A,\fm)$ is not necessarily
of characteristic $p>0$. For every $n \in \mathbb{N}$ there is a
natural homomorphism
$$H^i_{\fm} (M/I^{n+1}M) \rightarrow H^i_{\fm} (M/I^nM)$$
(induced from the natural projections $M/I^{n+1}M \rightarrow
M/I^nM$) such that the family forms a projective system. The
projective limit ${\vpl}_nH^i_{\fm}(M/I^n M)$ is called the $i$th
formal local cohomology of $M$ with respect to $I$ (cf. \cite{Sch}).
Formal local cohomology modules were used by Peskine and Szpiro in
\cite{P-S} when $A$ is a regular ring of prime characteristic. It is
noteworthy to mention that if $U = \Spec(A) \setminus \{\fm\}$ and
$( \widehat{U},\mathcal{O}_{\widehat{U}})$ denotes the formal
completion of $U$ along $V(I) \setminus \{\fm \}$ and For an
$A$-module $M$, $\mathcal{F}$ denotes the associated sheaf on $U$
and $\widehat{\mathcal{F}}$ denote the coherent
$\mathcal{O}_{\widehat{U}}$-sheaf associated to ${\vpl}_n M/I^n M$,
they have described the formal cohomology modules
$H^i(\widehat{U},\mathcal{O}_{\widehat{U}})$ via the isomorphisms
$H^i(\widehat{U},\mathcal{O}_{\widehat{U}})\cong
{\vpl}_nH^i_{\fm}(M/I^n M)$, $i \geq 1$. See also \cite[proposition
(2.2)]{Og} when $A$ is a Gorenstein ring.

The formal grade, $\fgrade(I,M)$, is defined as the index of the
minimal nonvanishing formal cohomology module, i.e., $\fgrade(I,M) =
\inf \{i \in \mathbb{Z}|\ \ {\vpl}_nH^i_{\fm}(M/I^n M) \neq 0\}$.
One way to check out vanishing of local cohomology modules is the
following duality
\begin{equation}\label{duality}
    {\vpl}_nH^i_{\fm}(A/I^n) \cong \Hom_A(H^{\dim A-i}_{I}(A),E(A/\fm)),
\end{equation}
where $(A,\fm)$ is a Gorenstein local ring and $E(A/\fm)$ denotes
the injective hull of the residue field (cf. \cite[Remark
3.6]{Sch}). To be more precise, in this case the last non vanishing
amount of $H^{i}_{I}(A)$ may be described with the $\fgrade(I,A)$.
Thus, it motivates us to consider the invariants $\Fdepth$ and
$\fgrade$ to shed more light on the notion of Frobenius depth from a
different point of view. For this reason, in Section $2$, we bring
some auxiliary results and among them we examine the structure of
${\vpl}_nH^i_{\fm}(A/I^n)$ as a unite $A[F^e]$-module (Theorem
\ref{unit}). In Section $3$, we show that the $F$-nilpotency of
$H^i_{\fm}(A/I),\ (i \in \mathbb{Z})$ is equivalent to the vanishing
of ${\vpl}_nH^i_{\fm}(A/I^n)$ for a regular local $F$-finite ring
$A$ (Proposition \ref{f-nil}). It helps us to reprove
\cite[Corollary 3.2, Lemma 2.4]{Lyu06}, see Corollary \ref{Lyu3.2}
and Proposition \ref{lyu4.2}. Finally, we compare the Frobenius
depth with the formal grade and depth, (cf. Theorem \ref{comparison}
and Corollary \ref{connected}).

Studying of the endomorphism rings of local cohomology modules are
of interest in recent years. See for instance Hochster-Huneke
\cite{Ho-Hu}, Hellus-St\"{u}ckrad \cite{He-S}, Schenzel
\cite{Sch09}, \cite{Sch10}, Eghbali-Schenzel \cite{E-Sch},
Mahmood-Schenzel \cite{Ma-Sch}. In Section $4$, we examine the
endomorphism rings of local cohomology modules,
$\Hom_R(H^i_I(R),H^i_I(R))$, where $R$ is a regular local ring
containing a field in both characteristic zero and $p>0$. Our way to
prove the results is to use the so-called $\fD$-modules and
Frobenius map. These methods have played decisive roles in many
subsequent studies in the rings of characteristic zero and positive
characteristic. As an application we reprove the results due to
Huneke-Koh \cite[Lemma 2.2]{Hu-K} in positive characteristic and
Lyubeznik \cite[Corollary 3.6]{Lyu93} in characteristic zero on the
annihilators of local cohomology modules (cf. Corollaries
\ref{annloc.0} and \ref{annloc.p}).

\section{Auxiliary Results in positive characteristic}

Throughout this section all rings are assumed to contain a field of
positive characteristic. The symbol $A$ will always denote a
commutative Noetherian ring of finite characteristic. We adapt the
notation from \cite{Bl} and except for notation we mostly follow
Lyubeznik \cite{Lyu97}. We let $F=F_A$ the Frobenius map on $A$,
that is $F:A \rightarrow A$, with $a \mapsto a^p,\ a \in A$. We
denote the $e$th iterate of the Frobenius map by $A^e$ which is the
$A-A$-bimodule. As a left $A$-module it is $A$ and as a right
$A$-module we have $m.a=a^{p^e}m$ for $m \in A^e$. We say $A$ is
F-finite, whenever $A^e$ is a finitely generated right $A$-module.

\begin{remark}\label{Hart-Speis}
Let us recall from \cite[Proposition 1.1(a)]{HS} that for a ring $A$
which is either a localization of an algebra of finite type over a
perfect field $k$, or a complete local ring containing a perfect
field $k$ as its residue field, then $A$ is $F$-finite.
\end{remark}

In the present section, among our results we recall various results
due to Hartshorne-Speiser \cite{HS}, Peskine-Szpiro \cite{P-S},
Lyubeznik \cite{Lyu97} and Blickle \cite{Bl}.

Peskine and Szpiro in \cite{P-S} defined the Frobenius functor as
follows:

\begin{definition} The Frobenius functor is the right exact functor
from $A$-modules to $A$-modules given by
$$F^{\ast}_A M:=A^1 \otimes_A M.$$
Its $e$th power is $F^{e\ast}_A M=A^e \otimes_A M$. For brevity we
often write $F^{e\ast}$ for $F^{e\ast}_A $ when there is no
ambiguity about the ring $A$.
\end{definition}

It follows from the definition that $F^{e\ast}$ commutes with direct
sum, direct limit and localization. By a theorem of Kunz \cite{Ku69}
the Frobenius functor is flat whenever $A$ is a regular ring, hence
in this case $F^{e\ast}$ will be exact and immediately one has
$F^{e\ast} A=A^e \otimes_A A \cong A$ and $F^{e\ast} I=I^{[p^e]}$ an
ideal of $A$ generated by $p^e$th powers of the elements of $I$.

\begin{definition} \label{def.unit} An $A[F^e]$-module is an $A$-module $M$ together with an $A$-linear map
$$\mathcal{V}^e_M:F^{e\ast}_AM \rightarrow M.$$
A morphism between two $A[F^e]$-modules $(M,\mathcal{V}^e_M)$ and
$(N,\mathcal{V}^e_N)$ is an $A$-linear map $\varphi : M \rightarrow
N$ such that the following diagram commutes:

$$\xymatrix{
F^{e\ast}M \ar[d]^{\mathcal{V}^e_M} \ar[r]^{F^{e\ast}(\varphi)} &F^{e\ast}N\ar[d]^{\mathcal{V}^e_N}\\
M \ar[r]^{\varphi} &N}.$$ In fact, we can consider $F^eM$ as a
$p^e$-linear map from $M \rightarrow M$; as such it is not
$A$-linear but we have $F^e(am) = a^{p^e}F^e(m),\ a \in A^e, m \in
M$. Furthermore, an $A[F^e]$-module $(M,\mathcal{V}^e_M)$ is called
a unite $A[F^e]$-module if $\mathcal{V}^e_M$ is an isomorphism (cf.
\cite[page 17, Definition 2.6]{Bl}).
\end{definition}

\begin{remark}
Since $F^{e\ast}A$ is canonically isomorphic to $A$, then $A$ is a
unite $A[F^e]$-module, but an ideal $I \subset A$ is not a unite
$A[F^e]$-module in general, as the inclusion $I^{[p^e]} \subseteq I$
can be strict. For a multiplicatively closed subset $S$ of $A$, the
structural map $\mathcal{V}^e_{S^{-1}A}:A^e \otimes S^{-1}A
\rightarrow S^{-1}A$ is an isomorphism (see \cite{Bl}).
\end{remark}

The following definition is introduced in \cite{HS}:

\begin{definition} \label{forgetful} Let $(M,\mathcal{V}^e)$ be an $A[F^e]$-module.
We define $G(M)$ as the inverse limit generated by the structural
map $\mathcal{V}^e$, i.e.
$$G(M):={\vpl}(\cdots \rightarrow F^{3e\ast}M \stackrel{F^{2e\ast}\mathcal{V}^e}{\longrightarrow}
 F^{2e\ast}M \stackrel{F^{e\ast}\mathcal{V}^e}{\longrightarrow} F^{e\ast}M \stackrel{\mathcal{V}^e}{\longrightarrow} M).$$
\end{definition}
Note that there are natural maps $\pi_e: G(M) \rightarrow
F^{e\ast}M$. Moreover, the maps $F^{e\ast}\pi_r: F^{e\ast}G(M)
\rightarrow F^{e(r+1)\ast}M$ are compatible with the maps defining
$G(M)$ and thus by the universal property of inverse limits, the map
$F^{e\ast}G(M) \rightarrow G(M)$ defines the natural $A[F^e]$-module
structure on $G(M)$.

\begin{prop}  \label{4.1}(\cite[Proposition 1.2]{HS} and \cite[Proposition 4.1]{Bl}) Let $A$ be regular and $F$-finite and $M$ an
$A[F^e]$-module. Then $G(M)$ is a unite $A[F^e]$-module.
\end{prop}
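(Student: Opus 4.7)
The plan is to show that the canonical map $\mathcal{V}^e_{G(M)}:F^{e\ast}G(M)\to G(M)$ induced by the universal property of the inverse limit is an isomorphism. The strategy is to move the Frobenius pullback through the inverse limit and then reindex.

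First I would establish that $F^{e\ast}(-)=A^e\otimes_A(-)$ commutes with inverse limits under the given hypotheses. Two ingredients enter here. Since $A$ is Noetherian and $F$-finite, $A^e$ is a finitely generated right $A$-module, and Noetherianity upgrades this to finite presentation. Since $A$ is regular, Kunz's theorem (already quoted in the excerpt) ensures that $A^e$ is flat. Now $\varprojlim_n F^{ne\ast}M$ can be realized as the kernel of the usual product-to-product difference map $\prod_n F^{ne\ast}M\to\prod_n F^{ne\ast}M$; tensoring with a finitely presented module commutes with arbitrary direct products, and tensoring with a flat module preserves kernels, so the two operations together preserve inverse limits.

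Applying this to the defining system of $G(M)$ yields
\[
F^{e\ast}G(M)\;\cong\;\varprojlim_n F^{e\ast}F^{ne\ast}M\;=\;\varprojlim_n F^{(n+1)e\ast}M,
\]
where the transition maps in the new system are precisely $F^{(n+1)e\ast}\mathcal{V}^e$, i.e.\ the transition maps of the original system shifted by one index. Dropping the term with $n=0$ does not change an inverse limit, so the shifted system has the same limit as the original one, and we obtain a canonical isomorphism $F^{e\ast}G(M)\xrightarrow{\;\sim\;}G(M)$.

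The remaining point, which I expect to be the most delicate, is to verify that the isomorphism produced above is the same as the structural map $\mathcal{V}^e_{G(M)}$ described just before the statement (the one assembled from the compatible family $F^{e\ast}\pi_r\colon F^{e\ast}G(M)\to F^{e(r+1)\ast}M$). I would check this by comparing the two maps componentwise through the projections $\pi_r\colon G(M)\to F^{re\ast}M$: both send an element of $F^{e\ast}G(M)$, under composition with $\pi_r$, to the image in $F^{re\ast}M$ obtained by first pushing to $F^{e(r+1)\ast}M$ and then applying the transition $F^{re\ast}\mathcal{V}^e$. This forces them to coincide by the universal property, and so $\mathcal{V}^e_{G(M)}$ is an isomorphism, proving that $G(M)$ is a unit $A[F^e]$-module.
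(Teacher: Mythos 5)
Your argument is correct. The paper itself offers no proof of Proposition \ref{4.1} -- it is quoted from Hartshorne--Speiser and Blickle -- but your proof is precisely the standard one from those sources: $A^e$ is flat by Kunz and finitely presented by $F$-finiteness plus Noetherianity, hence finitely generated projective, so $F^{e\ast}$ commutes with sequential inverse limits (the same commutation fact the paper invokes, via \cite[Proposition 1.1(b)]{HS}, in the proof of Proposition \ref{f-nil}), and the resulting identification $F^{e\ast}G(M)\cong\varprojlim_n F^{(n+1)e\ast}M\cong G(M)$ by cofinality is checked componentwise to agree with the structural map $\mathcal{V}^e_{G(M)}$. Nothing is missing.
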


In order to extend the Matlis duality functor $D(-)=\Hom(-, E_A)$
where, $E_A$ is the injective hull of the residue field, the functor
$\mathcal{D}$ from $A[F^e]$-modules to $A[F^e]$-modules is defined
as follows ( \cite[Section 4]{Bl}):

Let $(M, \mathcal{V}^e)$ be an $A[F^e]$-module. We define
$$\mathcal{D}(M)={\vil}(D(M) \stackrel{D(\mathcal{V}^e)}{\longrightarrow}
 D(F^{e\ast}M) \stackrel{D(F^{e\ast}(\mathcal{V})}{\longrightarrow} D(F^{2e\ast}M) \longrightarrow \ldots).$$
An element $m \in M$ is called $F$-nilpotent if $F^{re}(m)=0$ for
some $r$. Then $M$ is called $F$-nilpotent if $F^{er}(M)=0$ for some
$r \geq 0$. It is possible that every element of $M$ is
$F$-nilpotent but $M$ itself is not.

Below, we recall some properties of the functor $\mathcal{D}$.

\begin{prop} \label{4.16-4.17}Let $A$ be a complete regular local ring.

(a) On the subcategory of $A[F^e]$-modules which are cofinite (i.e.
satisfy the descending chain condition for submodules) as
$A$-modules,  the functor $\mathcal{D}$ is exact and
$\mathcal{D}(N)$ is a finitely generated unite $A[F^e]$-module for
every cofinite module $N$ (cf. \cite[Theorem 4.2(i)]{Lyu97} and
\cite[Proposition 4.16]{Bl}).

(b) Let $M$ be an $A[F^e]$-module that is finitely generated or
cofinite as an $A$-module. Then $\mathcal{D}(\mathcal{D}(M))\cong
G(M)$ (cf. \cite[Proposition 4.17]{Bl}).

(c) Let $M$ be an $A[F^e]$-module which is a cofinite $A$-module.
Then $M$ is $F$-nilpotent if and only if $\mathcal{D}(M) = 0$ (cf.
\cite[Theorem 4.2(ii)]{Lyu97} and \cite[Proposition 4.20]{Bl}).
\end{prop}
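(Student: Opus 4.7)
The plan is to handle the three parts separately while exploiting that $A$ is regular (so by Kunz the Frobenius functor $F^{e\ast}$ is exact), complete local (so Matlis duality $D$ restricts to an anti-equivalence between finitely generated and cofinite modules), and $F$-finite by Remark \ref{Hart-Speis}. For part (a), I would chain three standard exactness facts: $D$ is exact on the cofinite subcategory, $F^{e\ast}$ is exact on $A$-modules because $A$ is regular, and filtered direct limits are exact. Since cofiniteness passes to subobjects and quotients, a short exact sequence of cofinite $A[F^e]$-modules stays short exact after applying each $D \circ F^{ke\ast}$, and passing to $\vil$ preserves exactness. For finite generation of $\mathcal{D}(N)$ as a unit module, the point is that $D(N)$ is already finitely generated over $A$ by Matlis duality, and the transition map $D(N) \to D(F^{e\ast}N)$ realises the structural map $\mathcal{V}^e$ on $\mathcal{D}(N)$, so $D(N)$ generates $\mathcal{D}(N)$ as a unit $A[F^e]$-module.

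For part (b), my key observation is that $\mathcal{D}(M)$ is by construction a unit $A[F^e]$-module, so $F^{ke\ast}\mathcal{D}(M) \cong \mathcal{D}(M)$ canonically and the transition maps become identities. The direct system defining $\mathcal{D}(\mathcal{D}(M))$ therefore has all terms canonically isomorphic to $D(\mathcal{D}(M))$, and the direct limit collapses to
\[ \mathcal{D}(\mathcal{D}(M)) \;\cong\; D(\mathcal{D}(M)) \;=\; D\bigl(\vil_k D(F^{ke\ast}M)\bigr) \;\cong\; \vpl_k D(D(F^{ke\ast}M)), \]
using that $\Hom(-,E)$ converts colimits to limits. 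Since $F^{ke\ast}M$ is again finitely generated or cofinite (by $F$-finiteness of $A$ combined with Kunz), Matlis reflexivity yields $D(D(F^{ke\ast}M)) \cong F^{ke\ast}M$, and the resulting inverse system is precisely the one defining $G(M)$ in Definition \ref{forgetful}.

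For part (c), the forward direction is direct: if $F^{er}(M)=0$, then the iterated structural map $F^{er\ast}M \to M$ is zero, so its Matlis dual vanishes in the direct system for $\mathcal{D}(M)$, forcing an $r$-fold composition of transition maps to vanish and hence the colimit to be zero. For the converse I would exploit DCC: set $M_r = \im(\mathcal{V}^{er} \colon F^{er\ast}M \to M)$ and let $M_\infty$ be the stable value of this descending chain. A short diagram chase shows that $\mathcal{V}^e$ restricts to an isomorphism on $M_\infty$, so $M_\infty$ is a unit $A[F^e]$-submodule while $M/M_\infty$ is $F$-nilpotent. Applying the exactness of $\mathcal{D}$ from (a) to $0 \to M_\infty \to M \to M/M_\infty \to 0$ and using the forward direction for $M/M_\infty$ gives $\mathcal{D}(M) \cong \mathcal{D}(M_\infty) \cong D(M_\infty)$, which is nonzero unless $M_\infty = 0$. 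I expect the main obstacle to be verifying that $M_\infty$ is genuinely a unit submodule rather than merely a stable one — that $\mathcal{V}^e$ restricts to a \emph{bijection} on $M_\infty$ — which requires combining DCC on $M$ with exactness of $F^{e\ast}$ in exactly the right way.
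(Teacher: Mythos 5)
The paper does not actually prove this proposition: it is stated as a recollection of known results, with the citations to \cite[Theorem 4.2]{Lyu97} and \cite[Propositions 4.16, 4.17, 4.20]{Bl} standing in for the argument. So there is no in-paper proof to compare yours against; what you have written is in substance the standard proof from those sources, and it is essentially sound. The one ingredient your sketch leans on throughout without naming it is the natural isomorphism $D(F^{e\ast}N)\cong F^{e\ast}D(N)$ for $N$ finitely generated or cofinite over the complete regular local ring (Lyubeznik's Lemma 4.1, Blickle's Lemma 4.12). It is what identifies the direct system defining $\mathcal{D}(N)$ with $D(N)\to F^{e\ast}D(N)\to F^{2e\ast}D(N)\to\cdots$, hence exhibits $\mathcal{D}(N)$ in (a) as the \emph{unit} module generated by the finitely generated root $D(N)$, and it is also what guarantees in (b) that $\mathcal{D}(M)$ is unit in the finitely generated case, where (a) does not apply, before you are allowed to collapse $\mathcal{D}(\mathcal{D}(M))$ to $D(\mathcal{D}(M))$. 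You should state it explicitly; note too that invoking Remark \ref{Hart-Speis} to secure $F$-finiteness quietly adds a hypothesis (perfect residue field) that the cited sources do not need.

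The ``main obstacle'' you flag in part (c) is avoidable: you do not need $\mathcal{V}^e$ to restrict to a bijection on $M_\infty$. Exactness of $F^{e\ast}$ plus stabilization of the chain $M_r=\operatorname{im}(\mathcal{V}^{er})$ already gives that $F^{e\ast}M_\infty\to M_\infty$ is \emph{surjective}, and applying $F^{ke\ast}$ keeps it surjective; dualizing, every transition map in the direct system computing $\mathcal{D}(M_\infty)$ is injective, so the canonical map $D(M_\infty)\to\mathcal{D}(M_\infty)\cong\mathcal{D}(M)$ is injective. Faithfulness of Matlis duality then gives $\mathcal{D}(M)\neq 0$ whenever $M_\infty\neq 0$, which is exactly the contrapositive of the converse; the unit-submodule claim never has to be verified.
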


Let $A$ be a regular local ring and let $I$ be an ideal of $A$. As
we have seen (following \cite[page 19]{Bl}) that $A$ is an
$A[F^e]$-module and this structure comes down to its localization.
The local cohomology modules $H^i_I(A)$ of $A$ with support in $I$
can be calculated as the cohomology modules of the \v{C}ech complex
$$\check{C}(A; x_1,\ldots, x_n) = A \rightarrow A_{x_i} \rightarrow A_{x_ix_j} \rightarrow A_{x_1x_2\cdots x_n}$$
where, $I$ is generated by $x_1,x_2,\ldots, x_n$. As the category of
unite $A[F^e]$-modules is an abelian subcategory of the category of
$A[F^e]$-modules, the modules $H^i_I(A)$ are unite $A[F^e]$-modules
for all $i \in \mathbb{Z}$. For formal local cohomology modules, the
situation is a bit more complicated, however, we show that these
kind of modules have unite $A[F^e]$-modules structure, where $A$ is
$F$-finite.

\begin{thrm} \label{unit} Let $(A,\fm)$ be a
regular $F$-finite local ring. Then
$$G(H^i_{\fm}(A/I)) \cong
{\vpl}_nH^i_{\fm}(A/I^{n}),\ i \in \mathbb{Z}$$ which is a unite
$A[F^e]$-module. In particular, $$ {\vpl}_nH^i_{\fm}(A/I^{n}) \cong
H^i_{\fm}(\hat{A^I}),\ i \in \mathbb{Z}$$ as $A[F^e]$-modules, where
$\hat{A^I}$ is the completion of $A$ along $I$.
\end{thrm}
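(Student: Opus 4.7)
The plan is to obtain the first isomorphism by rewriting the Frobenius iterates that define $G(-)$ as local cohomology of quotients by Frobenius powers of $I$, and then to appeal to a cofinality argument to pass from Frobenius powers to ordinary powers of $I$. The final ``in particular'' clause is a separate comparison between formal local cohomology and the local cohomology of the $I$-adic completion of $A$.

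Since $A$ is regular, Kunz's theorem implies that $F^{e*}$ is exact and commutes with localization, hence with local cohomology. Combined with the identification $F^{e*}(A/I) = A^e \otimes_A A/I \cong A/I^{[p^e]}$ and its iterates, this gives a natural isomorphism $F^{re*}H^i_{\fm}(A/I) \cong H^i_{\fm}(A/I^{[p^{re}]})$, under which the transition maps in Definition \ref{forgetful} become those induced by the canonical surjections $A/I^{[p^{(r+1)e}]} \twoheadrightarrow A/I^{[p^{re}]}$. Consequently,
\[
G(H^i_{\fm}(A/I)) \;=\; \vpl_r H^i_{\fm}(A/I^{[p^{re}]}).
\]
I would then invoke the cofinality of the two families $\{I^{[p^{re}]}\}_{r\ge 0}$ and $\{I^n\}_{n\ge 1}$: one direction is immediate from $I^{[p^{re}]}\subseteq I^{p^{re}}$, while the other follows from the standard pigeonhole argument $I^{s(p^{re}-1)+1}\subseteq I^{[p^{re}]}$ when $I$ has $s$ generators. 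Cofinality of the ideals transports to cofinality of the corresponding inverse systems of local cohomology, yielding $G(H^i_{\fm}(A/I))\cong \vpl_n H^i_{\fm}(A/I^n)$, and Proposition \ref{4.1} ensures that the resulting module is a unite $A[F^e]$-module.

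For the ``in particular'' clause, the canonical surjections $\hat{A^I}\twoheadrightarrow A/I^n$ produce a compatible family $H^i_{\fm}(\hat{A^I}) \to H^i_{\fm}(A/I^n)$ and hence a natural morphism $H^i_{\fm}(\hat{A^I}) \to \vpl_n H^i_{\fm}(A/I^n)$. To show it is an isomorphism, one uses the description $H^i_{\fm}(M) = \vil_k \Ext^i_A(A/\fm^k, M)$; since $A/\fm^k$ has a resolution by finitely generated free $A$-modules, $\Ext^i_A(A/\fm^k,-)$ commutes with the inverse limit defining $\hat{A^I}$, and the resulting double limit can be swapped provided the Mittag-Leffler condition holds on the inverse system $\{\Hom_A(P_\bullet,A/I^n)\}_n$, which it does by Artin--Rees. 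Functoriality of the Frobenius on all ingredients forces this isomorphism to respect the $A[F^e]$-structure transported from the first part.

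The main obstacle is this last step: because local cohomology does not in general commute with inverse limits, the exchange of $\vil_k$ and $\vpl_n$ needs careful justification via the finiteness of a projective resolution of $A/\fm^k$ and the Mittag-Leffler machinery. The cofinality argument in the first part, though it is the engine of the proof, is comparatively routine once the identification $F^{e*}H^i_{\fm}(A/I)\cong H^i_{\fm}(A/I^{[p^e]})$ is set up.
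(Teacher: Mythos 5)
Your treatment of the first isomorphism is essentially the same as the paper's: both unpack $G(H^i_{\fm}(A/I))$ as $\vpl_r H^i_{\fm}(A/I^{[p^{re}]})$ using the exactness of $F^{e*}$ over a regular ring, invoke Proposition~\ref{4.1} for the unite structure, and then pass from Frobenius powers to ordinary powers by cofinality (the paper cites \cite[Lemma 3.8]{Sch} where you give the elementary pigeonhole argument, which is fine). The paper also separately verifies the unite structure on $\vpl_n H^i_{\fm}(A/I^n)$ via the Matlis dual description $\Hom_A(H^{\dim A - i}_I(A), E)$; you skip that and get the unite structure purely from the $G$-functor, which is logically adequate.

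The ``in particular'' clause is where you have a genuine gap. The paper simply cites \cite[Proposition~2.1]{HS}; you attempt a direct proof, and the limit-interchange step does not go through. What your Mittag--Leffler argument actually delivers, correctly applied, is that for each \emph{fixed} $k$ one has
$\Ext^i_A(A/\fm^k, \hat{A^I}) \cong \vpl_n \Ext^i_A(A/\fm^k, A/I^n)$
(surjectivity of the cochain-level transition maps, plus finite-length cohomology giving ML on $\{\Ext^{i-1}_A(A/\fm^k, A/I^n)\}_n$, kills the $\lim^1$). Taking $\vil_k$ of the left-hand side gives $H^i_{\fm}(\hat{A^I})$, and taking $\vpl_n$ of $\vil_k \Ext^i_A(A/\fm^k, A/I^n)$ gives $\vpl_n H^i_{\fm}(A/I^n)$; but to connect the two you must exchange $\vil_k$ with $\vpl_n$, and that exchange is a different problem altogether --- Mittag--Leffler controls $\lim^1$ of an inverse system, it says nothing about commuting a filtered colimit past an inverse limit, and such interchanges fail in general. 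Note also that your argument never uses regularity of $A$ in this step, whereas the comparison between formal local cohomology and $H^i_{\fm}(\hat{A^I})$ is known to be delicate in general and the proof in \cite{HS} relies on the regular structure; the absence of that hypothesis in your reasoning is a signal that something essential is missing. Either cite \cite[Proposition~2.1]{HS} as the paper does, or supply the substantive argument that justifies the exchange in this particular situation.
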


\begin{proof}
By what we have seen above,  $H^i_{\fm}(A/I)$ is an $A[F^e]$-module
for all $i \in \mathbb{N}$. Now, we may apply functor $G(-)$ to
$H^i_{\fm}(A/I)$:
$$G(H^i_{\fm}(A/I))=
{\vpl}(\cdots \stackrel{F^{2e\ast}\mathcal{V}^e}{\longrightarrow}
 H^i_{\fm}(A/I^{[p^{2e}]}) \stackrel{F^{e\ast}\mathcal{V}^e}{\longrightarrow} H^i_{\fm}(A/I^{[p^{e}]}) \stackrel{\mathcal{V}^e}
 {\longrightarrow} H^i_{\fm}(A/I)).$$
By virtue of Proposition \ref{4.1}, $G(H^i_{\fm}(A/I))$ is a unite
$A[F^e]$-module. Notice that, the right hand side is nothing but
${\vpl}_eH^i_{\fm}(A/I^{[p^e]})$.

On the other hand, one has ${\vpl}_nH^i_{\fm}(A/I^{n}) \cong
\Hom_A(H^{\dim A-i}_{I}(A),E)$, where $E:=E(A/\fm)$ is the injective
hull of the residue field. The natural map
$$F^{e \ast} \Hom_A(H^{\dim A-i}_{I}(A),E) \rightarrow \Hom_A(H^{\dim A-i}_{I}(A),E)$$
by sending $r \otimes \varphi$ to $rF^{e \ast}(\varphi)$ is an
isomorphism of $A[F^e]$-modules ($r \in A$ and $\varphi \in
\Hom_A(H^{\dim A-i}_{I}(A),E)$). To this end note that $H^{\dim
A-i}_{I}(A)$ and $E \cong H^{\dim A}_{\fm}(A)$ carry natural unite
$A[F^e]$-structure. Thus, ${\vpl}_nH^i_{\fm}(A/I^{n})$ is a unite
$A[F^e]$-module for each $i \in \mathbb{Z}$.

In order to complete the proof, it is enough to show that
${\vpl}_eH^i_{\fm}(A/I^{[p^e]}) \cong {\vpl}_nH^i_{\fm}(A/I^{n})$.
For this reason, consider the decreasing family of ideals
$\{I^{[p^e]}\}_e$. Clearly, its topology is equivalent to the
$I$-adic topology on $A$. Thus, by \cite[Lemma 3.8]{Sch} there
exists a natural isomorphism
$${\vpl}_eH^i_{\fm}(A/I^{[p^e]}) \cong
{\vpl}_nH^i_{\fm}(A/I^{n})$$ for all $i \in \mathbb{Z}$.

The last part follows by \cite[Proposition 2.1]{HS}.
\end{proof}

\section{Frobenius depth} \label{sect2}

Let $A$ be a regular local $F$-finite ring of characteristic $p>0$
and let $I$ be an ideal of $A$. As we have seen in the previous
section the formal local cohomology modules, are unite
$A[F^e]$-modules. In the present section we use the unite $A[F^e]$
structure of ${\vpl}_nH^i_{\fm}(A/I^{n})$ in order to prove our
results.

\begin{prop} \label{f-nil} Let $(A,\fm)$ be a
regular local and $F$-finite ring. Then
${\vpl}_nH^i_{\fm}(A/I^{n})=0$ if and only if $H^i_{\fm}(A/I)$ is
$F$-nilpotent.
\end{prop}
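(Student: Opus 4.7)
The plan is to reduce the statement to the functors $G$ and $\mathcal{D}$ developed in Section~2 and apply the reflexivity machinery of Proposition~\ref{4.16-4.17}. Setting $M := H^i_{\fm}(A/I)$, the module $M$ is Artinian (since $A/I$ is finitely generated and $\Supp_A M \subseteq \{\fm\}$), hence cofinite in the sense of Proposition~\ref{4.16-4.17}. Moreover, by Theorem~\ref{unit} the assertion ${\vpl}_n H^i_{\fm}(A/I^{n}) = 0$ is equivalent to $G(M) = 0$, so the proposition reduces to the claim that, for a cofinite $A[F^e]$-module $M$, one has $G(M) = 0$ if and only if $M$ is $F$-nilpotent.

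For the implication ``$F$-nilpotent $\Rightarrow G(M) = 0$'' I would invoke Proposition~\ref{4.16-4.17}(c) to conclude that $\mathcal{D}(M) = 0$, and then Proposition~\ref{4.16-4.17}(b) to deduce $G(M) \cong \mathcal{D}(\mathcal{D}(M)) = \mathcal{D}(0) = 0$. This direction is essentially a formal consequence of the previous results.

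The main obstacle is the converse. From $G(M) = 0$ and Proposition~\ref{4.16-4.17}(b), I can only conclude that $\mathcal{D}(\mathcal{D}(M)) = 0$, and I still need to promote this to $\mathcal{D}(M) = 0$ before I can appeal to Proposition~\ref{4.16-4.17}(c). For this I would set $N := \mathcal{D}(M)$; by Proposition~\ref{4.16-4.17}(a), $N$ is a finitely generated unit $A[F^e]$-module. Unitality means the structural map $\mathcal{V}^e_N : F^{e\ast}N \to N$ is an isomorphism, and since $A$ is regular the functor $F^{e\ast}$ is exact, so every map in the inverse system defining $G(N)$ is an isomorphism. Consequently $G(N) \cong N$ canonically.

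Finally I would apply Proposition~\ref{4.16-4.17}(b) to the finitely generated module $N$ and use $\mathcal{D}(N) = \mathcal{D}(\mathcal{D}(M)) = 0$ to obtain
\[
N \;\cong\; G(N) \;\cong\; \mathcal{D}(\mathcal{D}(N)) \;=\; \mathcal{D}(0) \;=\; 0,
\]
so $\mathcal{D}(M) = 0$. Proposition~\ref{4.16-4.17}(c), applied to the cofinite $A[F^e]$-module $M$, then yields the $F$-nilpotency of $M = H^i_{\fm}(A/I)$, completing the argument. The only nontrivial ingredient beyond the cited results is the observation that on unit $A[F^e]$-modules the functor $G$ is naturally isomorphic to the identity; this is what makes the reflexivity $\mathcal{D}\mathcal{D} \cong G$ strong enough to give the converse.
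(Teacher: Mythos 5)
Your argument is correct in substance and follows the same overall strategy as the paper: both proofs funnel the statement through the functor $\mathcal{D}$ of Proposition \ref{4.16-4.17} and reduce the question to whether $\mathcal{D}(M)$ vanishes, where $M=H^i_{\fm}(A/I)$. Where you differ is in the converse direction. The paper first observes that $\mathcal{D}(\mathcal{D}(M))\cong D(\mathcal{D}(M))$ (the direct system defining $\mathcal{D}$ of a finitely generated unit module consists of isomorphisms), then computes $D(\mathcal{D}(M))\cong {\vpl}_e H^i_{\fm}(A/I^{[p^e]})\cong {\vpl}_n H^i_{\fm}(A/I^{n})$ explicitly, and finally passes from the vanishing of $D(\mathcal{D}(M))$ to that of $\mathcal{D}(M)$ by faithfulness of Matlis duality on finitely generated modules. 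You instead quote Theorem \ref{unit} for the identification $G(M)\cong {\vpl}_n H^i_{\fm}(A/I^{n})$ and promote $\mathcal{D}(\mathcal{D}(M))=0$ to $\mathcal{D}(M)=0$ by applying the biduality $\mathcal{D}\mathcal{D}\cong G$ a second time to $N=\mathcal{D}(M)$, together with the observation that $G$ is the identity on unit modules (note that exactness of $F^{e\ast}$ is not needed for this: any functor preserves isomorphisms). Both mechanisms are valid and of comparable length; yours is slightly more formal, the paper's slightly more computational, and the two are really two faces of the same duality.

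One step you omit: Proposition \ref{4.16-4.17} is stated for \emph{complete} regular local rings, so before invoking parts (a)--(c) you must replace $A$ by its $\fm$-adic completion. This is harmless -- $H^i_{\fm}(A/I)$ and $H^i_{\widehat{\fm}}(\widehat{A}/I\widehat{A})$ agree together with their Frobenius actions, ${\vpl}_n H^i_{\fm}(A/I^{n})$ is unchanged, and $\widehat{A}$ is again regular, local and $F$-finite -- but the paper devotes the opening paragraph of its proof to exactly this reduction, and your write-up should record it as well.
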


\begin{proof}
By the assumptions $A$ is $F$-finite that is $A^e$ is a finitely
generated $A$-module. Then tensoring with $A^e$ commutes with the
inverse limit, as $A^e$ is a free right $A$-module (cf.
\cite[Proposition 1.1(b)]{HS}). Thus, we have
$$A^e \otimes_A \widehat{A}=A^e \otimes_A ({\vpl}_n A/I^n) \cong {\vpl}_n A^e/A^eI^n= {\vpl}_n A^e/I^{n[p^e]}A^e \cong \widehat{A^e}.$$
On the other hand, since the Frobenius action is the same in both
$H^i_{\fm}(A/I)$ and $H^i_{\widehat{\fm}}(\widehat{A}/I
\widehat{A})$, so we may assume that $A$ is a complete regular local
$F$-finite ring.

As $H^i_{\fm}(A/I)$ is an $A[F^e]$-module which is a cofinite
$A$-module, then $\mathcal{D}(H^i_{\fm}(A/I))$ is a finitely
generated unite $A[F^e]$-module (cf. \ref{4.16-4.17}(a)) and
therefore
$$\mathcal{D}(\mathcal{D}(H^i_{\fm}(A/I)))\cong
D(\mathcal{D}(H^i_{\fm}(A/I))).$$ As the functor $D(-)$ transforms
direct limits to inverse limits, then
\begin{eqnarray*}
  D(\mathcal{D}(H^i_{\fm}(A/I)))\!\!\!\! &=&\!\!\!\! D({\vil}(D(H^i_{\fm}(A/I))
  \rightarrow D(F^{e\ast} H^i_{\fm}(A/I)) \rightarrow D(F^{2e\ast} H^i_{\fm}(A/I)) \rightarrow \cdots)  \\
            \!\!\!\! & \cong &\!\!\!\! {\vpl}(\cdots \rightarrow D(D(F^{2e\ast} H^i_{\fm}(A/I)))
  \rightarrow D(D(F^{e\ast} H^i_{\fm}(A/I))) \rightarrow
  D(D(H^i_{\fm}(A/I)))) \\
  \!\!\!\! & \cong &\!\!\!\! {\vpl}_eH^i_{\fm}(A/I^{[p^e]}).
\end{eqnarray*}
As we have seen in the proof of Theorem \ref{unit},
${\vpl}_eH^i_{\fm}(A/I^{[p^e]}) \cong {\vpl}_nH^i_{\fm}(A/I^{n})$.
Therefore,
 $H^i_{\fm}(A/I)$ is $F$-nilpotent if and only if
$\mathcal{D}(H^i_{\fm}(A/I))=0$ (\ref{4.16-4.17}(c)) if and only if
${\vpl}_nH^i_{\fm}(A/I^{n})=0$.
\end{proof}

\begin{remark}
Notice that in the Proposition \ref{f-nil} the $F$-finiteness of $A$
is vital, because it guarantees the $A[F^e]$ structure of the
modules ${\vpl}_nH^i_{\fm}(A/I^{n})$. However, in the light of
\cite[Lemma 4.12]{Bl} if $H^{\dim A-i}_I(A)$ is cofinite, then the
module ${\vpl}_nH^i_{\fm}(A/I^{n})$ is $A[F^e]$-module.
\end{remark}

\begin{cor} \label{Lyu3.2}(\cite[Corollary 3.2]{Lyu06}) Let $(A,\fm)$ be a
 regular local ring and $I$ an ideal of $A$. Then $H^{\dim A-i}_{I}(A)=0$ if
and only if $F^{er}:H^i_{\fm}(A/I) \rightarrow H^i_{\fm}(A/I)$ is
the zero map for some $r>0$.
\end{cor}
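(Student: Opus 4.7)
The plan is to chain three equivalences drawn from the paper. First, by Proposition~\ref{f-nil}, which applies because $A$ is regular $F$-finite local, the existence of some $r>0$ with $F^{er}=0$ on $H^i_{\fm}(A/I)$ is precisely the $F$-nilpotency of $H^i_{\fm}(A/I)$, and this is equivalent to the vanishing of ${\vpl}_n H^i_{\fm}(A/I^{n})$. Second, since regular implies Gorenstein, the duality recorded in~(\ref{duality}) identifies
\[
{\vpl}_n H^i_{\fm}(A/I^n) \cong \Hom_A(H^{\dim A-i}_I(A), E(A/\fm)),
\]
so the vanishing of the formal local cohomology module becomes the vanishing of this Matlis dual.

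Third, I would argue that $\Hom_A(M, E(A/\fm)) = 0$ forces $M = 0$ over the local ring $(A,\fm)$. Indeed, $E(A/\fm)$ is an injective cogenerator: for any nonzero $A$-module $M$, pick a nonzero $m \in M$; then $Am \cong A/\Ann(m)$ has proper annihilator and hence surjects onto $A/\fm$, and composing with the inclusion $A/\fm \hookrightarrow E(A/\fm)$ followed by an extension to $M$ via injectivity of $E(A/\fm)$ yields a nonzero element of $\Hom_A(M, E(A/\fm))$. Applying this with $M = H^{\dim A-i}_I(A)$ closes the final equivalence, and the three-step chain delivers the corollary.

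The main obstacle I anticipate is this last step: $H^{\dim A-i}_I(A)$ need not be Artinian or finitely generated, so the standard Matlis anti-equivalence does not apply verbatim. The cogenerator observation above sidesteps the issue, but it is the one place where the argument requires more than a mechanical invocation of earlier results. Everything else follows by assembling Proposition~\ref{f-nil} with the duality~(\ref{duality}), both of which are already in place.
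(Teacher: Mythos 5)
Your proof is correct, and it is in fact cleaner and more symmetric than the paper's own. The forward direction agrees exactly: use the duality~(\ref{duality}) to translate $H^{\dim A-i}_I(A)=0$ into ${\vpl}_n H^i_{\fm}(A/I^n)=0$, then invoke Proposition~\ref{f-nil}. For the converse, however, the paper takes a different and less transparent route: it applies Proposition~\ref{4.16-4.17}(c) directly to conclude $\mathcal{D}(H^i_{\fm}(A/I))=0$, and then identifies $\mathcal{D}(H^i_{\fm}(A/I))$ with $H^{\dim A-i}_I(A)$ by passing through $D(H^{\dim A-i}_{\fm}(A/I^{[p^e]}))\cong\Ext^i_A(A/I^{[p^e]},A)$ together with the cofinality of Frobenius powers with ordinary powers. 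You avoid all of this by simply observing that $E(A/\fm)$ is an injective cogenerator, so $\Hom_A(-,E)$ reflects vanishing; your argument for that fact (lift a nonzero cyclic submodule through $A/\fm\hookrightarrow E$, then extend by injectivity) is exactly right and requires no finiteness hypothesis on $H^{\dim A-i}_I(A)$. The payoff is a single chain of three honest equivalences, where the paper instead argues the two directions by different mechanisms. What the paper's route buys, in compensation, is an explicit identification $\mathcal{D}(H^i_{\fm}(A/I))\cong H^{\dim A-i}_I(A)$, which is of independent interest but is not needed for the corollary.

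Two cautions, both shared with the paper's own proof and so not really against you. First, Proposition~\ref{f-nil} is stated for $F$-finite regular local rings, whereas the corollary assumes only ``regular local.'' You wrote ``which applies because $A$ is regular $F$-finite local,'' but $F$-finiteness is not among the corollary's hypotheses; you have silently imported it rather than flagged it as a needed reduction. The paper commits the same omission (it passes to $\widehat A$, but a complete regular local ring need not be $F$-finite unless, say, the residue field is perfect). Second, the duality~(\ref{duality}) as used here really wants $A$ complete (local duality produces a completion on one side otherwise); the paper does perform the reduction to $\widehat A$ at the outset, using faithful flatness of $\widehat A$ over $A$, and you should do the same before invoking~(\ref{duality}).
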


\begin{proof} As $\widehat{A}$ is a faithful flat $A$-module then by passing to the completion
we may assume that $A$ is complete regular local ring. Let $H^{\dim
A-i}_{I}(A)=0$, so it is cofinite. Then by the duality
(\ref{duality}) in the introduction, one has
${\vpl}_nH^i_{\fm}(A/I^{n})=0$. Hence, Proposition \ref{f-nil}
implies that $H^i_{\fm}(A/I)$ is $F$-nilpotent.

Conversely, assume that $H^i_{\fm}(A/I)$ is $F$-nilpotent. Then by
Propostion \ref{4.16-4.17}(c) $\mathcal{D}(H^i_{\fm}(A/I))=0$ and
therefore one has $H^{\dim A-i}_{I}(A)=0$. To this end note that,
$\mathcal{D}(H^{\dim A-i}_{\fm}(A/I^{[p^e]})) \cong
\Ext^{i}_A(A/I^{[p^e]},A)$ for all $e \geq 0$ and the Frobenius
powers of $I$ are cofinal with its ordinary powers.
\end{proof}

In the light of Corollary \ref{Lyu3.2}, Lyubeznik \cite{Lyu06}
defined the $\Fdepth$ of a local ring in order to give a solution to
Grothendieck's Problem.

\begin{definition} Let $(A,\fm)$ be a local ring. The $\Fdepth$ of
$A$ is the smallest $i$ such that $F^{er}$ does not send
$H^i_{\fm}(A)$ to zero for any $r$.
\end{definition}

One of elementary properties of $\Fdepth$ shows that $\Fdepth A$ is
equal to the $\Fdepth$ of its $\fm$-adic completion, $\widehat{A}$
(cf. \cite[Proposition 4.4]{Lyu06}) because $H^i_{\fm}(A)\cong
H^i_{\widehat{\fm}}(\widehat{A})$. In the next result we give an
alternative proof of \cite[Lemma 4.2]{Lyu06} to emphasize that
$\Fdepth$ of $A$ is bounded above by its Krull dimension.

\begin{prop} \label{lyu4.2} Let $(A,\fm)$ be a
 local ring and $I$ an ideal of $A$. Then $F^{er}$ does not send $H^{\dim A}_{\fm}(A)$ to zero for any $r$.
 In particular, $0 \leq Fdepth A \leq \dim A$.
\end{prop}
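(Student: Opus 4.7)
The plan is to reduce the problem to the situation where Corollary~\ref{Lyu3.2} applies and then invoke the standard non-vanishing of local cohomology at the grade. The equality $H^i_{\fm}(A)\cong H^i_{\widehat{\fm}}(\widehat{A})$ together with faithful flatness of completion lets me replace $A$ by $\widehat A$; a further faithfully flat local extension to a ring with perfect residue field preserves both $H^i_{\fm}$ and its Frobenius action, so without loss of generality I may assume $A$ is a complete $F$-finite local ring by Remark~\ref{Hart-Speis}. The case $d:=\dim A=0$ is immediate, because then $H^0_{\fm}(A)=A$ is non-zero and $F^{er}(1)=1\neq 0$ for every $r$, so I assume $d\geq 1$.

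By Cohen's structure theorem I would next write $A=R/I$, where $R=k[[x_1,\ldots,x_n]]$ is a regular $F$-finite complete local ring and $I$ is a proper ideal. Because $R$ is Cohen--Macaulay we have $\Ht I=n-d$, and $H^d_{\fm}(A)\cong H^d_{\fm_R}(R/I)$. Applying Corollary~\ref{Lyu3.2} to the pair $(R,I)$ with $i=d$, the vanishing of $F^{er}$ on $H^d_{\fm}(A)$ for some $r>0$ is equivalent to $H^{n-d}_{I}(R)=0$.

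However, since $R$ is Cohen--Macaulay and $I$ is a proper ideal, $\grade(I,R)=\Ht I=n-d$, and by the very definition of grade $H^{\grade(I,R)}_{I}(R)\neq 0$. This forces $F^{er}$ to be non-zero on $H^{\dim A}_{\fm}(A)$ for every $r>0$, which is the main assertion. The upper bound $\Fdepth A\leq \dim A$ follows immediately, while $\Fdepth A\geq 0$ is built into the definition.

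I expect the only delicate point to be the initial reduction: one must check that the residue-field extension used to secure the $F$-finiteness hypothesis of Proposition~\ref{f-nil} (and hence of Corollary~\ref{Lyu3.2}) is genuinely faithfully flat and intertwines the Frobenius on the modules $H^i_{\fm}$. Once this is in place, the rest of the argument is a formal application of Corollary~\ref{Lyu3.2} together with the well-known fact that the least non-vanishing index of local cohomology with respect to an ideal is its grade.
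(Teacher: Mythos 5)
Your argument is correct, and it shares the paper's overall skeleton (reduce to the complete case, write $A\cong R/J$ with $R$ regular via Cohen's structure theorem, and translate $F$-nilpotency of $H^{\dim A}_{\fm}(A)$ into the vanishing of a local cohomology module of $R$ via Corollary~\ref{Lyu3.2}), but it diverges at the decisive non-vanishing step. The paper, after deducing the vanishing of $H^{\dim R-\dim A}_{J}(R)$, passes back through the duality (\ref{duality}) to conclude that ${\vpl}_nH^{\dim A}_{\fm}(R/J^n)=0$ and derives the contradiction from Schenzel's theorem that the \emph{top formal local cohomology module} is always non-zero. You instead observe that $\dim R-\dim A=\Ht J=\grade(J,R)$ because $R$ is Cohen--Macaulay, and invoke the classical fact that $H^{\grade(J,R)}_{J}(R)\neq 0$; this keeps the argument entirely inside ordinary local cohomology and rests on a more elementary input (the two non-vanishing statements are of course Matlis dual to one another via (\ref{duality}), so the proofs are logically very close). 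Two further remarks. First, you are more careful than the printed proof about the $F$-finiteness hypothesis hiding behind Corollary~\ref{Lyu3.2} (via Proposition~\ref{f-nil}); your gonflement-style faithfully flat extension to a perfect residue field is exactly the device the paper itself uses later in Corollary~\ref{annloc.p}, and the direction of Corollary~\ref{Lyu3.2} you actually need ($F$-nilpotency $\Rightarrow$ vanishing) only uses Proposition~\ref{4.16-4.17}(c), so the reduction is even slightly overkill. Second, note that the paper's proof asserts the vanishing of $H^{0}_{J}(R)$ where it should read $H^{\dim R-\dim A}_{J}(R)$; your version states the correct index, which is another small point in favour of your write-up.
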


\begin{proof} Since the Frobenius action is the same in both
$H^i_{\fm}(A)$ and $H^i_{\widehat{\fm}}(\widehat{A})$, so we may
assume that $A$ is a complete local ring. Thus, by the Cohen's
Structure Theorem $A \cong R/J$, where $R$ is a complete regular
local ring and $J \subset R$ and ideal. In the contrary, assume that
$H^{\dim A}_{\fm}(R/J)$ is $F$-nilpotent. Then, $\mathcal{D}(H^{\dim
A}_{\fm}(R/J))=0$ (cf. \ref{4.16-4.17}(c)) and with a similar
argument given in the proof of Corollary \ref{Lyu3.2}, one can
deduce the vanishing of $H^{0}_{J}(R)$. Hence, by virtue of
(\ref{duality}), in the introduction, one has ${\vpl}_nH^{\dim A}
_{\fm}(R/J^{n})=0$ which is a contradiction (cf. \cite[Theorem
4.5]{Sch}).
\end{proof}

To investigate the other properties of $\Fdepth$, in the next
Theorem we compare the Frobenius depth of $A$ and $A^{sh}$. For this
reason, let us recall some preliminaries. For a local ring $A$ we
denote by $A^{sh}$ the strict Henselization of $A$. A local ring
$(A,\fm,k)$ is said to be strictly Henselian if and only if every
monic polynomial $f(T)\in A[T]$ for which $\overline{f(T)} \in k[T]$
is separable splits into linear factors in $A[T]$. For more advanced
expositions on this topic we refer the interested reader to \cite
{Mi}.

\begin{prop} \label{Hensel}  Let $(A,\fm)$ be a complete local ring. Then
  $\Fdepth A=\Fdepth A^{sh}$.
\end{prop}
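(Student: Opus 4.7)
The plan is to combine flat base change for local cohomology with the naturality of Frobenius. First I would observe that $\iota : A \hookrightarrow A^{sh}$ is a faithfully flat local homomorphism with $\fm A^{sh} = \fm^{sh}$, the maximal ideal of $A^{sh}$. Flat base change then gives
$$H^i_{\fm^{sh}}(A^{sh}) \cong H^i_{\fm}(A) \otimes_A A^{sh},$$
so the canonical comparison map $\alpha : H^i_{\fm}(A) \to H^i_{\fm^{sh}}(A^{sh})$ is nothing but $x \mapsto x \otimes 1$, and faithful flatness makes $\alpha$ injective.

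Next I would use that $\iota$ is a homomorphism of characteristic $p$ rings, whence $\iota \circ F_A = F_{A^{sh}} \circ \iota$; by functoriality of local cohomology this upgrades to the intertwining relation $\alpha \circ F^{er}_A = F^{er}_{A^{sh}} \circ \alpha$ for all $e,r \geq 0$. For one direction, if $F^{er}_{A^{sh}}$ annihilates $H^i_{\fm^{sh}}(A^{sh})$, then for every $x \in H^i_{\fm}(A)$ one has $\alpha(F^{er}_A(x)) = F^{er}_{A^{sh}}(\alpha(x)) = 0$, and injectivity of $\alpha$ forces $F^{er}_A(x) = 0$. For the reverse direction I would invoke the $p^{er}$-semilinearity of Frobenius: $F^{er}_{A^{sh}}(b\cdot y) = b^{p^{er}} F^{er}_{A^{sh}}(y)$ for $b \in A^{sh}$ and $y \in H^i_{\fm^{sh}}(A^{sh})$. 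Since $\alpha(H^i_{\fm}(A))$ generates $H^i_{\fm^{sh}}(A^{sh})$ over $A^{sh}$ by the base change isomorphism, the vanishing of $F^{er}_A$ on $H^i_{\fm}(A)$ propagates to the vanishing of $F^{er}_{A^{sh}}$ on all of $H^i_{\fm^{sh}}(A^{sh})$.

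These two implications together show that, for each $i$, some iterate $F^{er}_A$ kills $H^i_{\fm}(A)$ if and only if some iterate $F^{er}_{A^{sh}}$ kills $H^i_{\fm^{sh}}(A^{sh})$. Taking the smallest $i$ for which no such $r$ exists on either side then identifies $\Fdepth A$ with $\Fdepth A^{sh}$. I do not expect any serious obstacle here; the only point deserving care is the reverse implication, where the $p^{er}$-semilinearity is precisely what allows one to pass from vanishing on the image $\alpha(H^i_{\fm}(A))$ to vanishing on the full $A^{sh}$-module.
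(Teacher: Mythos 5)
Your argument is correct, but it is genuinely different from the one in the paper. You work directly with the Frobenius action on $H^i_{\fm}(A)$: flat base change gives $H^i_{\fm^{sh}}(A^{sh})\cong H^i_{\fm}(A)\otimes_A A^{sh}$, the comparison map $\alpha$ intertwines the two Frobenius actions and is injective by faithful flatness, and the $p^{er}$-semilinearity lets you propagate vanishing from the image of $\alpha$ (which generates the target over $A^{sh}$) to the whole module. This shows $F$-nilpotence of $H^i_{\fm}(A)$ and of $H^i_{\fm^{sh}}(A^{sh})$ are equivalent for every $i$, hence the two Frobenius depths agree. The paper instead first writes $A=R/J$ with $R$ regular via Cohen's Structure Theorem (this is where completeness is used), translates $F$-nilpotence of $H^i_{\fm}(R/J)$ into the vanishing of $H^{\dim R-i}_J(R)$ via Corollary \ref{Lyu3.2}, and then applies faithfully flat ascent and descent of that vanishing along $R\to R^{sh}$, using $\dim R=\dim R^{sh}$ and $(R/J)^{sh}=R^{sh}/JR^{sh}$. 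Your route is more elementary in that it avoids the duality machinery and Corollary \ref{Lyu3.2} entirely, and it does not actually need $A$ to be complete; the paper's route has the advantage of recycling the vanishing criterion already established, so the Henselization statement becomes a one-line consequence of flat base change for $H^i_J(R)$. The only points you should make explicit are the standard facts you rely on: $A^{sh}$ is Noetherian local, $A\to A^{sh}$ is faithfully flat with $\fm A^{sh}$ equal to the maximal ideal of $A^{sh}$, and the compatibility $\alpha\circ F_A=F_{A^{sh}}\circ\alpha$, which is most cleanly seen on the \v{C}ech complex for a set of generators of $\fm$ and their images in $A^{sh}$.
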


\begin{proof} First assume that $A$ is a regular local ring. We show that $\Fdepth A/I=\Fdepth
(A/I)^{sh}$. Put $\Fdepth A/I=t$. Then, by virtue of Corollary
\ref{Lyu3.2} one has $H^{i}_{I}(A)=0$ for all $i
> \dim A-t$. Due to the faithfully flatness of the inclusion $A
\rightarrow A^{sh}$ and the fact that $A^{sh}$ is a regular local
ring, it implies that $H^{i}_{I}(A^{sh})=0$ for all $i
> \dim A-t$. Again, using Corollary
\ref{Lyu3.2}, it follows that $\Fdepth (A/I)^{sh} \geq t$. To this
end note that $\dim A=\dim A^{sh}$ and $(A/I)^{sh}=A^{sh}/IA^{sh}$.
With the similar argument one has $\Fdepth A/I \geq \Fdepth
(A/I)^{sh}$. This completes the assertion.

Since $A$ is a complete local ring, then by virtue of Cohen's
Structure Theorem, $A$ is a homomorphic image of a regular local
ring $R$, i.e. $A=R/J$ for some ideal $J$ of $R$. Now, we are done
by the previous paragraph. To this end note that
$$\Fdepth A =\Fdepth R/J=\Fdepth (R/J)^{sh}=\Fdepth A^{sh}.$$
\end{proof}

\begin{remark} From Proposition \ref{Hensel} and \cite[Proposition 4.4]{Lyu06} one may
deduce that
$$\Fdepth A=\Fdepth \widehat{((\hat{A})^{sh})},$$
where, $A$ is a local ring.
\end{remark}

In the following, we compare the invariants $\depth$, $\Fdepth$ and
$\fgrade$. Let us recall that the formal grade, $\fgrade(I,R)$, is defined as the index of the minimal nonvanishing
formal cohomology module, i.e., $\fgrade(I,R) = \inf \{i \in \mathbb{Z}|\ \ {\vpl}_nH^i_{\fm}(R/I^n ) \neq 0\}$.

\begin{thrm} \label{comparison}  Let $(A,\fm)$ be a  local $F$-finite ring and let $I$ be
an ideal of $A$. Then
 $$ \fgrade(I,A) \leq \depth A \leq \Fdepth A.$$
\end{thrm}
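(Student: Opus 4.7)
The second inequality $\depth A \leq \Fdepth A$ is essentially tautological. For any $i < \depth A$, the module $H^i_\fm(A)$ is zero, so every Frobenius iterate $F^{er}$ trivially sends $H^i_\fm(A)$ to zero. Hence no such $i$ satisfies the condition defining $\Fdepth A$, which forces $\Fdepth A \geq \depth A$.

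For the inequality $\fgrade(I, A) \leq \depth A$, I would proceed by induction on $t := \depth A$. In the base case $t = 0$, there exists a nonzero $z \in A$ with $\fm^N z = 0$ for some $N$. By Krull's intersection theorem (applicable since $I \subseteq \fm$ is proper), $\bigcap_n I^n = 0$, so the images $\bar z_n$ in $A/I^n$ are nonzero for all $n$ sufficiently large. These images are compatible under the transition maps and lie in $H^0_\fm(A/I^n)$, yielding a nonzero element of $\vpl_n H^0_\fm(A/I^n)$. Therefore $\fgrade(I, A) = 0 = t$.

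For the inductive step $t > 0$, pick an $A$-regular element $y \in \fm$, so that $\depth(A/yA) = t - 1$ and by the inductive hypothesis $\fgrade(I, A/yA) \leq t - 1$. The short exact sequence $0 \to A \xrightarrow{y} A \to A/yA \to 0$, after $I$-adic completion (flat by the Noetherian hypothesis), becomes $0 \to \widehat{A^I} \xrightarrow{y} \widehat{A^I} \to \widehat{(A/yA)^I} \to 0$. Taking $H^i_\fm$ and invoking the identification $\vpl_n H^i_\fm(M/I^n M) \cong H^i_\fm(\widehat{M^I})$ from Theorem \ref{unit} yields the long exact sequence
$$
\cdots \to \vpl_n H^{i-1}_\fm((A/yA)/I^n(A/yA)) \to \vpl_n H^i_\fm(A/I^n) \xrightarrow{y} \vpl_n H^i_\fm(A/I^n) \to \vpl_n H^i_\fm((A/yA)/I^n(A/yA)) \to \cdots
$$
Setting $s = \fgrade(I, A/yA) \leq t - 1$, the module $\vpl_n H^s_\fm((A/yA)/I^n(A/yA))$ is nonzero by the definition of $s$. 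If $\vpl_n H^s_\fm(A/I^n)$ is already nonzero, then $\fgrade(I, A) \leq s \leq t$; otherwise, the connecting homomorphism embeds the nonzero $\vpl_n H^s_\fm((A/yA)/I^n(A/yA))$ into $\vpl_n H^{s+1}_\fm(A/I^n)$, giving $\fgrade(I, A) \leq s + 1 \leq t$. In either case we obtain $\fgrade(I, A) \leq t$, completing the induction.

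The main obstacle is justifying the application of Theorem \ref{unit}: its isomorphism is stated for regular $F$-finite local rings, whereas Theorem \ref{comparison} assumes only $F$-finite. To handle the non-regular case, one can either pass to a regular presentation $A = R/J$ via Cohen's Structure Theorem, or replace the displayed long exact sequence by an alternative limit argument based on Artin--Rees: the four-term sequence $0 \to (I^n :_A y)/I^n \to A/I^n \xrightarrow{y} A/I^n \to A/(yA + I^n) \to 0$, together with $(I^n :_A y) \subseteq I^{n-c}$, shows the kernel system $\{(I^n :_A y)/I^n\}_n$ is pro-zero, so its $\vpl_n$ and $\vpl^1_n$ vanish and the sought long exact sequence survives passage to the limit.
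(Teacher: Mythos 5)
Your proof is correct, but it takes a genuinely different route from the paper's on both inequalities. For $\depth A \leq \Fdepth A$ you argue directly from the definition: $H^i_{\fm}(A)=0$ for $i<\depth A$, so these indices cannot realize the minimum defining $\Fdepth A$. This is indeed immediate and uses neither $F$-finiteness nor any characteristic-$p$ machinery. The paper instead passes to the completion, writes $A\cong R/J$ with $R$ regular by Cohen's Structure Theorem, proves the sharper identity $\Fdepth A=\fgrade(J,R)$ via Proposition \ref{f-nil}, and then quotes $\depth A\leq\fgrade(J,R)$ from \cite{Sch} and \cite{E13}; that detour buys the equality $\Fdepth A=\fgrade(J,R)$ (the real content of the comparison with formal grade), but it is not needed for the inequality as stated. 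For $\fgrade(I,A)\leq\depth A$ the paper simply cites \cite[Lemma 4.8(b)]{Sch}, whereas you reprove it by induction on $\depth A$ using a regular element $y\in\fm$; your induction is sound, and your concern about applying Theorem \ref{unit} outside the regular case is resolved exactly as you propose: the required sequence is the long exact sequence of formal local cohomology associated to $0\to A\xrightarrow{\,y\,}A\to A/yA\to 0$, which is established in \cite{Sch} by precisely your Artin--Rees argument (the kernel system $\{(I^n:_A y)/I^n\}$ is pro-zero, and the Artinianness of the modules $H^i_{\fm}(A/I^n)$ gives Mittag--Leffler, so the ${\vpl}^1$ terms vanish). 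The only hypothesis to keep visible is that $I$ is proper, which both your Krull-intersection step and the statement itself implicitly require. Net effect: your argument proves the theorem without actually using the $F$-finiteness assumption, at the cost of not exhibiting $\Fdepth A$ as a formal grade.
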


\begin{proof} We have ${\vpl}_nH^i_{\fm \hat{A}}(\hat{A}/I^n \hat{A}) \cong {\vpl}_nH^i_{\fm}(A/I^n
A)$ (cf. \cite[Proposition 3.3]{Sch}) and $\Fdepth A \cong \Fdepth
\hat{A}$ (cf. \cite[Proposition 4.4]{Lyu06}). As $\fm$-adic
completion of an $F$-finite ring is again $F$-finite, Then, we may
assume that $A$ is a complete local $F$-finite ring. Then $K=A/\fm$
is $F$-finite, then so is every finitely generated algebra over $K$.

By the Cohen's Structure Theorem there exists an $F$-finite regular local ring $(R,\fn)$ with
$A \cong R/J$ where, $J$ is an ideal of $R$.

 Put $\fgrade(J,R)=t$. Then by
definition ${\vpl}_nH^i_{\fn}(R/J^n)=0$ for all $i <t$. It follows
from the Proposition \ref{f-nil} that $H^{i}_{\fn}(R/J)$ is
$F$-nilpotent for all $i<t$, i.e. $\Fdepth R/J \geq t$. With a
similar argument and again using Proposition \ref{f-nil} we have
$\fgrade(J,R)\geq \Fdepth R/J$. Thus, $\fgrade(J,R)= \Fdepth R/J$.

Now, we are done by \cite[Lemma 4.8(b)]{Sch}, \cite[Remark 3.1]{E13}
and the previous paragraph. To this end note that
\begin{eqnarray*}
  \fgrade(I,A) \leq \depth A  &\leq&  \fgrade(J,R)  \\
             &=& \Fdepth R/J \\
             &=& \Fdepth A.
\end{eqnarray*}
\end{proof}

Let $A$ be a complete local ring containing a perfect field $k$ as
its residue field, then $A$ satisfies the condition of Theorem
\ref{comparison}. To this end, note that by Remark \ref{Hart-Speis},
$A$ is F-finite. Furthermore, by virtue of Cohen's Structure Theorem
$A \cong R/J$ for some ideal $J \subset R$, where
$R=k[[x_1,\ldots,x_n]]$ is a regular $F$-finite ring.

\begin{cor} \label{connected}
 Let $(A,\fm)$ be a regular local and $F$-finite ring. Then we have
$$\depth A/I \leq \fgrade(I,A)=\Fdepth A/I \leq \dim A/I.$$
\end{cor}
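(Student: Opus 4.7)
The plan is to verify each piece of the chain
$$\depth A/I \leq \fgrade(I,A) = \Fdepth A/I \leq \dim A/I$$
separately, each being a direct consequence of earlier material.

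First I would establish the central equality $\fgrade(I,A) = \Fdepth A/I$ using Proposition \ref{f-nil}. Since $(A,\fm)$ is regular local and $F$-finite, the proposition tells us that for every $i \in \mathbb{Z}$,
$$\vpl_n H^i_{\fm}(A/I^n) = 0 \iff H^i_{\fm}(A/I) \text{ is } F\text{-nilpotent}.$$
Taking the infimum over $i$ of the indices for which the equivalent conditions fail gives
$$\fgrade(I,A) = \inf\{i : \vpl_n H^i_{\fm}(A/I^n) \neq 0\} = \inf\{i : H^i_{\fm}(A/I) \text{ is not } F\text{-nilpotent}\} = \Fdepth A/I,$$
where the last equality is just the definition of $\Fdepth A/I$, after noting that $H^i_{\fm}(A/I) \cong H^i_{\fm/I}(A/I)$ and that the Frobenius action coincides on both sides.

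Next, for the upper bound $\Fdepth A/I \leq \dim A/I$, I would apply Proposition \ref{lyu4.2} directly to the local ring $A/I$. Since $A$ has characteristic $p > 0$, so does $A/I$, and the proposition applies without any regularity hypothesis, giving $0 \leq \Fdepth A/I \leq \dim A/I$.

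Finally, for the lower bound $\depth A/I \leq \fgrade(I,A)$, I would reuse the argument already employed in the proof of Theorem \ref{comparison}. There, for any regular local $F$-finite ring $R$ with ideal $J$, the inequality $\depth R/J \leq \fgrade(J,R)$ was obtained by invoking \cite[Remark 3.1]{E13}. Specializing that step with $R = A$ (which is regular local and $F$-finite by hypothesis) and $J = I$ gives exactly $\depth A/I \leq \fgrade(I,A)$. Since each step is a direct specialization or reassembly of results already in hand, the only thing to guard against is a notational mismatch between the ambient ring $A$ and the quotient $A/I$ in the various invariants; there is no new technical obstacle to overcome.
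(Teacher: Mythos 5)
Your proposal is correct and follows essentially the same route as the paper: the equality $\fgrade(I,A)=\Fdepth A/I$ is exactly the Proposition \ref{f-nil} argument from the proof of Theorem \ref{comparison}, and the outer bounds are the quoted facts $\depth A/I \leq \fgrade(I,A) \leq \dim A/I$ (the paper gets the right-hand bound from \cite[Remark 3.1]{E}, while you equivalently invoke Proposition \ref{lyu4.2} for $A/I$ — an immaterial difference).
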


\begin{proof}
The assertion follows from what we have seen in the proof of Theorem
\ref{comparison} and
 \cite[Remark 3.1]{E}.
\end{proof}

\begin{remark}
(a) The necessary and sufficient conditions for small values of the
$\Fdepth$ of $A$ is given in \cite[Corollary 4.6]{Lyu06}.
\begin{itemize}
  \item [(1)] $\Fdepth A>0$ if and only if $\dim A>0$.
  \item [(2)] $\Fdepth A>1$ if and only if $\dim A \geq 2$ and the punctured spectrum of $A$ is formally
geometrically connected.
\end{itemize}

Now, let $A$ be $F$-finite and $\depth A=0 < \dim A$.
 Then, one has $\fgrade(I,A)=0 < \Fdepth
 A$. It shows that the inequality in Theorem \ref{comparison}
can be strict.

(b) Keep the assumptions in Corollary \ref{connected}, if $\Fdepth
A/I>1$, then by \cite[Lemma 5.4]{Sch}, one has
$\Supp_{\hat{A}}(\hat{A}/I \hat{A})\setminus \{\hat{\fm}\}$ is
connected. To this end, note that $\hat{A}$ is a local ring so it is
indecomposable.
\end{remark}

\section{Endomorphism rings of local cohomology modules}

Let $R$ be a commutative algebra and $k \subset R$ be a field. We
denote by $\End_k (R)$ the $k$-linear endomorphism ring. The ring of
$k$-linear differential operators $\fD_{R|k} \subseteq \End_k (R)$
generated by the $k$-linear derivations $R \rightarrow R$ and the
multiplications by elements of $R$. By a $\fD_{R|k}$-module we
always mean a left $\fD_{R|k}$-module. The injective ring
homomorphism $R \rightarrow \fD_{R|k}$ that sends $r$ to the map $R
\rightarrow R$ which is the multiplication by $r$, gives $\fD_{R|k}$
a structure of $R$-algebra. Every $\fD_{R|k}$-module $M$ is
automatically an $R$-module via this map. The natural action of
$\fD_{R|k}$ on $R$ makes $R$ a $\fD_{R|k}$-module. If $R=k[[x_1,
\ldots,x_n]]$ is a formal power series ring of $n$ variables $x_1,
\ldots, x_n$ over $k$, then $\fD_{R|k}$ is left and right
Noetherian. Moreover, $\fD_{R|k}$ is a simple ring. Noteworthy, the
local cohomology module $H^{i}_{I}(R),\ i \in \mathbb{Z}$ is a
finitely generated $\fD_{R|k}$-module. For a more advanced
exposition based on differential operators and undefined concepts
the interested reader might consult \cite{Bj}. The similar results
are true whenever $R=k[x_1, \ldots,x_n]$ is a polynomial ring of $n$
variables $x_1, \ldots, x_n$ over $k$ (cf. \cite{Cou}). For a quick
introduction in this topic we refer the reader to \cite{Lyu00}. For
brevity we often write $\fD_R$ for $\fD_{R|k}$ when there is no
ambiguity about the field $k$.

Finally, we bring the following Lemma will be used later.

\begin{lem} \label{B} Let $R$ be a commutative ring containing a field $k$. Let $M$ be both an $R$-module and a
$\fD_R$-module. Then $\Ann_{\fD_R}M =0$ implies $\Ann_R M =0$.
\end{lem}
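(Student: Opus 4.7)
The plan is to exploit the canonical injective ring homomorphism $R \hookrightarrow \fD_R$ recalled just before the lemma, which sends each $r \in R$ to the differential operator ``multiplication by $r$''. Under this embedding, any $\fD_R$-module automatically acquires an $R$-module structure via the map $R \to \fD_R$, and the hypothesis that $M$ is both an $R$-module and a $\fD_R$-module is naturally read as saying that its given $R$-action coincides with the $R$-action inherited from the $\fD_R$-structure.

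Granting this compatibility, the verification is almost immediate. Suppose $r \in \Ann_R M$, so that $r \cdot m = 0$ for every $m \in M$ with respect to the $R$-module structure. By compatibility, the same equality holds when $r$ is viewed as an element of $\fD_R$ acting on $M$, so $r$ (as an element of $\fD_R$) lies in $\Ann_{\fD_R} M$. By hypothesis $\Ann_{\fD_R} M = 0$, hence $r = 0$ in $\fD_R$, and then injectivity of $R \hookrightarrow \fD_R$ forces $r = 0$ in $R$. This yields $\Ann_R M = 0$.

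The only step that warrants any attention is the injectivity of $R \hookrightarrow \fD_R$, but this is standard: a nonzero $r \in R$ acts nontrivially on $1 \in R$, so it defines a nonzero element of $\End_k(R)$ and therefore a nonzero element of $\fD_R$. Consequently there is no real obstacle; the lemma simply records the natural descent of annihilator information from the larger ring $\fD_R$ to its subring $R$, a fact which will be applied in the sequel to transfer $\fD_R$-theoretic vanishing statements about local cohomology modules into statements about $R$-annihilators.
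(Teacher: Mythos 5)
Your proposal is correct and follows essentially the same route as the paper: both arguments view $r\in\Ann_R M$ as the multiplication operator in $\fD_R$ via the canonical embedding $R\hookrightarrow\fD_R$, observe that it lies in $\Ann_{\fD_R}M=0$, and conclude $r=0$ from injectivity of that embedding. Your explicit remark that the given $R$-action on $M$ must agree with the one induced from the $\fD_R$-structure is a point the paper leaves implicit, so nothing is missing.
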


\begin{proof} Let $r \in \Ann_R M$ be an arbitrary element. As the endomorphism $\varphi_r :R \rightarrow
R$ with $\varphi_r(s) =rs$ for all $s \in R$ is an element of
$\fD_R$ so from $rsM =0$ (for all $s \in R$) we have $\varphi_r(s)M
=0$. That is $\varphi_r(s) \in \Ann_{\fD_R}M =0$, i.e. $rs =
\varphi_r(s) =0$, for all $s \in R$. Hence, we have $r = 0$, as
desired.
\end{proof}

\subsection{Characteristic $0$}

Throughout this subsection $k$ denotes a field of characteristic
zero. Let $R$ be either $k[|x_1, \ldots,x_n|]$, a formal power
series ring or $k[x_1, \ldots,x_n]$ a polynomial ring of $n$
variables $x_1, \ldots, x_n$ over $k$. It is known that the local
cohomology module $H^{i}_{I}(R),\ i \in \mathbb{Z}$ is a holonomic
$\fD_{R}$-module \cite{Lyu93}, i.e. it is a finitely generated
$\fD_{R}$-module with $d(H^{i}_{I}(R))=n$. To this end note that
$d(H^{i}_{I}(R))$ is the so-called Bernstein dimension of
$H^{i}_{I}(R)$ which is by the definition the Krull dimension of the
characteristic variety $\Char (H^{i}_{I}(R))$ (see \cite{Bj} and
\cite{Cou}). In the following result we show that this is not the
case for its endomorphism ring.

For the convenience of the reader let us recall two useful facts.

\begin{fact} \label{Bjork}(cf. \cite{Bj} and \cite{Cou})

\begin{enumerate}
  \item For any nonzero finitely generated $\fD_R$-module $M$, one has $n \leq d(M)
\leq 2n$.
  \item $d(\fD_R)=2n$.
\end{enumerate}
\end{fact}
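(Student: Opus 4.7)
The plan is to derive both statements from the standard Bernstein filtration on $\fD_R$, using that the Bernstein dimension $d(M)$ can equivalently be read off as the degree of the Hilbert polynomial of $M$ with respect to any good filtration (this is how $d(M)$ is linked to the Krull dimension of $\Char(M)$).

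First I would equip $\fD_R$ with the Bernstein filtration $\{B_j\}_{j\ge 0}$, where $B_j$ is the $k$-span of the monomials $x^\alpha\partial^\beta$ with $|\alpha|+|\beta|\le j$. Because $[\partial_i,x_j]=\delta_{ij}$ lowers total filtration degree, the associated graded ring $\operatorname{gr}^B\fD_R$ is canonically the commutative polynomial ring $k[x_1,\dots,x_n,\xi_1,\dots,\xi_n]$ in $2n$ variables. For part (2), I would then simply compute $\dim_k B_j=\binom{2n+j}{2n}$, a polynomial in $j$ of exact degree $2n$, giving $d(\fD_R)=2n$. For the upper bound $d(M)\le 2n$ in part (1), any finitely generated $\fD_R$-module admits a surjection $\fD_R^m\twoheadrightarrow M$; transporting the filtration and invoking monotonicity of Hilbert polynomial degrees under quotients yields $d(M)\le d(\fD_R)=2n$.

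The lower bound $d(M)\ge n$ is Bernstein's inequality, and this is the main obstacle. I would fix a good filtration $\Gamma_\bullet$ on $M$ and study the $k$-linear action map
\[
\varphi_j\colon B_j\fD_R \longrightarrow \Hom_k(\Gamma_j M,\, \Gamma_{2j} M),
\]
proving by induction on $j$ that $\varphi_j$ is injective for every $j\ge 0$. If a nonzero $P\in B_j\fD_R$ annihilates $\Gamma_j M$, then a suitable commutator $[P,x_i]$ or $[P,\partial_i]$ lies in $B_{j-1}\fD_R$, is nonzero (detected on its symbol in $\operatorname{gr}^B\fD_R$), and still annihilates $\Gamma_{j-1} M$, contradicting the inductive hypothesis; the crucial input is that $\fD_R$ acts faithfully on every nonzero $\fD_R$-module, which follows from simplicity of $\fD_R$ in characteristic zero. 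Once injectivity is in hand, a dimension count gives
\[
\binom{2n+j}{2n}=\dim_k B_j\fD_R \;\le\; \dim_k \Gamma_j M \cdot \dim_k \Gamma_{2j} M,
\]
and comparing degrees as polynomials in $j$ forces the Hilbert polynomial of $M$ to have degree at least $n$, i.e.\ $d(M)\ge n$. The delicate point, which is the main technical obstacle, is keeping the commutator argument entirely inside the Bernstein filtration and ensuring nonvanishing of the produced commutator at every step of the induction, since this is where simplicity of $\fD_R$ and the characteristic-zero hypothesis enter.
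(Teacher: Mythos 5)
The paper offers no proof of this Fact at all---it is recalled with citations to Bj\"ork and Coutinho---so the relevant comparison is with the standard proofs in those references. What you propose is essentially Joseph's elementary proof of Bernstein's inequality as presented in Coutinho: the Bernstein filtration $B_j$ with $\operatorname{gr}^B\fD_R\cong k[x,\xi]$, the count $\dim_k B_j=\binom{2n+j}{2n}$ giving $d(\fD_R)=2n$ and (via a surjection $\fD_R^m\twoheadrightarrow M$) the upper bound $d(M)\le 2n$, and the inductive injectivity of $B_j\to\Hom_k(\Gamma_jM,\Gamma_{2j}M)$ followed by comparing degrees $2n\le 2\,d(M)$. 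For $R=k[x_1,\dots,x_n]$ this is correct, modulo two small points: (i) the base case needs the good filtration normalized so that $\Gamma_0M\neq 0$; (ii) the ``crucial input'' in the commutator step is not simplicity or faithfulness of the action but the bare computation $[\partial^\beta,x_i]=\beta_i\partial^{\beta-e_i}$ and $[\partial_i,x^\alpha]=\alpha_i x^{\alpha-e_i}$, nonzero in characteristic zero whenever the relevant exponent is positive---the induction is self-contained, and invoking simplicity here is slightly circular since simplicity of the Weyl algebra is itself usually proved by the same commutator computation. You would also need to justify, or explicitly cite, the equivalence between the Bernstein-filtration Hilbert degree and $\dim\Char(M)$, since the paper defines $d(M)$ via the characteristic variety.

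The genuine gap is that the Fact is asserted (and later used, e.g.\ in Proposition \ref{A} and Theorem \ref{annhom}) for $R=k[[x_1,\dots,x_n]]$ as well, and there your argument does not run: the formal power series ring admits no Bernstein filtration, since the spaces $B_j$ built from monomials in $x$ and $\partial$ of bounded total degree would have to contain all of $R$ in degree $0$ and are therefore infinite-dimensional over $k$, so both the count $\dim_k B_j=\binom{2n+j}{2n}$ and the inequality $\dim_kB_j\le\dim_k\Gamma_jM\cdot\dim_k\Gamma_{2j}M$ lose all content. In that setting one works with the order filtration, whose associated graded ring is $R[\xi_1,\dots,\xi_n]$, and Bernstein's inequality becomes the statement $\dim\Char(M)\ge n$, which is proved by entirely different means---Bj\"ork's homological route via the equality $j(M)+d(M)=2n$ for the grade $j(M)=\min\{i:\Ext^i_{\fD_R}(M,\fD_R)\neq0\}$ together with Auslander regularity, or alternatively the involutivity of the characteristic variety (Sato--Kawai--Kashiwara, Gabber). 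To cover the full statement you must either supply one of these arguments for the power series case or restrict your proof explicitly to the polynomial case and cite Bj\"ork for the other.
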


\begin{prop} \label{A} Let $R$ be as above. Suppose that $H^{i}_{I}(R)\neq 0$, then
 $d(\Hom_{k}(H^{i}_{I}(R),H^{i}_{I}(R)))=2n$. In
  particular $\Hom_{k}(H^{i}_{I}(R),H^{i}_{I}(R))$ is not a
  holonomic $\fD_R$-module.
\end{prop}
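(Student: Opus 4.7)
The plan is to embed $\fD_R$ itself as a $\fD_R$-submodule of $\End_k(H^i_I(R)):=\Hom_k(H^i_I(R),H^i_I(R))$, and then apply Fact~\ref{Bjork}. First I would give $\End_k(H^i_I(R))$ the left $\fD_R$-module structure by left composition, $D\cdot\varphi:=D\circ\varphi$, where $D$ acts on $H^i_I(R)$ via its canonical (holonomic) $\fD_R$-module structure. Associativity of composition shows this really defines a left $\fD_R$-action, and moreover the natural ring homomorphism
\[
\rho:\fD_R\longrightarrow\End_k(H^i_I(R)),\qquad D\longmapsto(m\mapsto Dm),
\]
is $\fD_R$-linear for this action, since $\rho(D_1D_2)=D_1\circ\rho(D_2)=D_1\cdot\rho(D_2)$.

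Next I would verify that $\rho$ is injective. The kernel $\ker(\rho)=\Ann_{\fD_R}(H^i_I(R))$ is a two-sided ideal of $\fD_R$, because $\rho$ is a ring map. Since $\fD_R$ is a simple ring (as recalled in the paper for both the formal power series and polynomial settings), and since $\rho(1)=\id_{H^i_I(R)}\neq 0$ thanks to the hypothesis $H^i_I(R)\neq 0$, we must have $\ker(\rho)=0$. Hence $\rho$ realises $\fD_R$ as a left $\fD_R$-submodule of $\End_k(H^i_I(R))$. By Fact~\ref{Bjork}(2), $d(\fD_R)=2n$, so submodule monotonicity of the Bernstein dimension forces $d(\End_k(H^i_I(R)))\geq 2n$. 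Conversely, every finitely generated $\fD_R$-submodule $N\subseteq\End_k(H^i_I(R))$ satisfies $d(N)\leq 2n$ by Fact~\ref{Bjork}(1), so taking the supremum over such $N$ yields $d(\End_k(H^i_I(R)))\leq 2n$, and hence equality. The \emph{in particular} clause is then immediate: a holonomic $\fD_R$-module has Bernstein dimension exactly $n$, while we have just shown the dimension here is $2n\neq n$.

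The main subtlety I anticipate is the precise meaning of $d$ on the possibly non-finitely-generated module $\End_k(H^i_I(R))$; I would resolve this by adopting the standard convention $d(M):=\sup\{d(N):N\subseteq M\text{ is a finitely generated }\fD_R\text{-submodule}\}$, under which both the submodule monotonicity used for the lower bound and the uniform upper bound $d\leq 2n$ arising from Fact~\ref{Bjork}(1) become automatic. Apart from this interpretive point, the only non-formal input to the whole argument is the simplicity of $\fD_R$, which is already recorded in the paper.
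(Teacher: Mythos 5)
Your proof is correct and follows essentially the same route as the paper: both embed $\fD_R$ into $\Hom_{k}(H^{i}_{I}(R),H^{i}_{I}(R))$ via the action map $P\mapsto(m\mapsto Pm)$, use simplicity of $\fD_R$ to get injectivity, and deduce the lower bound from $d(\fD_R)=2n$. If anything you are more careful on the upper bound: the paper asserts that $\Hom_{k}(H^{i}_{I}(R),H^{i}_{I}(R))$ is a finitely generated $\fD_R$-module so that Fact~\ref{Bjork}(1) applies directly, whereas your sup-over-finitely-generated-submodules convention sidesteps having to justify that finite-generation claim.
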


\begin{proof} As $H^{i}_{I}(R)$ is a finitely generated $\fD_R$-module, then so is
$\Hom_{k}(H^{i}_{I}(R),H^{i}_{I}(R))$. As $H^{i}_{I}(R)$ is a
non-zero $k$-vector space, it has a non-trivial endomorphism ring.
Consider the homomorphism
\begin{equation}\label{hom}
   \fD_R \stackrel{f}{\longrightarrow} \Hom_{k}(H^{i}_{I}(R),H^{i}_{I}(R))
\end{equation}
of $\fD_R$-modules defined by $f(P)(m) = Pm,\ P \in \fD_R$ and $m
\in H^{i}_{I}(R)$ for all $i \in \mathbb{Z}$. The homomorphism $f$
is injective, as $\fD_R$ is a simple ring. In the light of
(\ref{hom}) and the fact that $d(\fD_R)=2n$, the dimension of
$\Hom_{k}(H^{i}_{I}(R),H^{i}_{I}(R))$ is at least $2n$. Hence, the
assertion follows from Fact (\ref{Bjork}).
\end{proof}

\begin{thrm} \label{annhom}
Let $R$ be either $k[|x_1, \ldots,x_n|]$, a formal power series ring
or $k[x_1, \ldots,x_n]$ a polynomial ring of $n$ variables $x_1,
\ldots, x_n$ over $k$. Suppose that $H^i_I(R)\neq 0$. Then
$$\Ann_{R}(\Hom_{R}(H^{i}_{I}(R),H^{i}_{I}(R)))=0.$$
\end{thrm}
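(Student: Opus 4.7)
The plan is to reduce the assertion to showing $\Ann_R(H^i_I(R)) = 0$ and then deduce this from Lemma~\ref{B}. The reduction uses the identity endomorphism $\id_{H^i_I(R)} \in \Hom_R(H^i_I(R),H^i_I(R))$: if $r \in R$ annihilates the whole endomorphism module then in particular $r \cdot \id = 0$, which means $rm = 0$ for every $m \in H^i_I(R)$, so $r \in \Ann_R(H^i_I(R))$. Since the reverse inclusion $\Ann_R(H^i_I(R)) \subseteq \Ann_R(\Hom_R(H^i_I(R),H^i_I(R)))$ is immediate (as $f(m) \in H^i_I(R)$ for every $f$ and $m$), the theorem is equivalent to showing $\Ann_R(H^i_I(R)) = 0$.

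The second step is to show that $\Ann_{\fD_R}(H^i_I(R)) = 0$ via simplicity of $\fD_R$, which holds in the setting of the theorem as recalled at the start of Section~4. A direct check confirms that $\Ann_{\fD_R}(H^i_I(R))$ is a two-sided ideal of $\fD_R$: closure on the left is built into the definition, and closure on the right follows from $(PQ)m = P(Qm) = 0$ for $P$ in the annihilator, $Q \in \fD_R$, and $m \in H^i_I(R)$. Because $H^i_I(R) \neq 0$ by hypothesis, $1_{\fD_R}$ does not lie in this annihilator, so simplicity forces the ideal to vanish.

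Applying Lemma~\ref{B} to $M = H^i_I(R)$ now turns $\Ann_{\fD_R}(H^i_I(R)) = 0$ into $\Ann_R(H^i_I(R)) = 0$, which together with the first paragraph closes the argument. I do not anticipate a serious obstacle; the core of the proof is the identity-endomorphism reduction that passes the annihilator question from $\Hom_R(H^i_I(R),H^i_I(R))$ to $H^i_I(R)$ itself, after which simplicity of $\fD_R$ together with Lemma~\ref{B} take over. Proposition~\ref{A} is not strictly required for this annihilator statement, though it is the natural companion fact about the $k$-linear endomorphism ring and uses the same embedding $\fD_R \hookrightarrow \Hom_k(H^i_I(R),H^i_I(R))$ that underlies the simplicity argument in the background.
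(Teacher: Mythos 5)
Your proof is correct and rests on the same two pillars as the paper's: simplicity of $\fD_R$ (so that the $\fD_R$-annihilator of a nonzero $\fD_R$-module vanishes) followed by Lemma~\ref{B} to pass from the $\fD_R$-annihilator to the $R$-annihilator. The one difference is which module you feed into this machine: the paper applies it directly to $\Hom_{R}(H^{i}_{I}(R),H^{i}_{I}(R))$, asserting without elaboration that this Hom module is a nonzero $\fD_R$-module, whereas you apply it to $H^{i}_{I}(R)$ itself and then transfer the conclusion through the observation that $r\cdot\id=0$ forces $r\in\Ann_R(H^i_I(R))$, so the two annihilators coincide. Your variant is marginally more self-contained, since it sidesteps the need to equip the endomorphism module with a $\fD_R$-structure (a point the paper leaves implicit), at the cost of the easy extra reduction via the identity endomorphism; both arguments are sound.
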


\begin{proof}
As $\Hom_{R}(H^{i}_{I}(R),H^{i}_{I}(R))$ is a non zero $\fD_R$-module and
$\fD_R$ is a simple ring one has
$\Ann_{\fD_R}(\Hom_{R}(H^{i}_{I}(R),H^{i}_{I}(R)))=0$. Now, we are
done by Lemma \ref{B}.
\end{proof}

Now, we may recover a result of Lyubeznik.

\begin{cor} \label{annloc.0} (cf. \cite[Corollary 3.6]{Lyu93})
Let $(R,\fm)$ be a regular local ring containing a field of
characteristic zero. Suppose that $H^i_I(R)\neq 0$. Then
$\Ann_{R}H^{i}_{I}(R)=0$.
\end{cor}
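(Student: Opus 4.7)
The plan is to reduce the statement to Theorem \ref{annhom} by passing to the $\fm$-adic completion of $R$. The bridge between the two annihilator assertions is the elementary identity
\[
\Ann_R M = \Ann_R \Hom_R(M,M),
\]
valid for any $R$-module $M$: if $r \in \Ann_R M$ then $(r\varphi)(m) = r\varphi(m) = 0$ for every $\varphi$ and every $m$, and conversely evaluating the vanishing at $\varphi = \id_M$ recovers $\Ann_R M$. So proving $\Ann_R H^i_I(R) = 0$ is equivalent to proving $\Ann_R \Hom_R(H^i_I(R),H^i_I(R)) = 0$, the form delivered by Theorem \ref{annhom}.

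First I would invoke Cohen's Structure Theorem: since $R$ is a regular local ring containing a field of characteristic zero, its $\fm$-adic completion $\widehat{R}$ is isomorphic to $K[[x_1,\dots,x_n]]$, where $K$ is the residue field of $R$ (hence of characteristic zero) and $n = \dim R$. Because $R \hookrightarrow \widehat{R}$ is faithfully flat, flat base change yields $H^i_{I\widehat{R}}(\widehat{R}) \cong H^i_I(R) \otimes_R \widehat{R}$, and faithful flatness ensures this module is nonzero whenever $H^i_I(R)$ is nonzero.

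Next I would apply Theorem \ref{annhom} directly to $\widehat{R}$, which now meets its hypotheses as a formal power series ring over a field of characteristic zero. This yields $\Ann_{\widehat{R}} \Hom_{\widehat{R}}(H^i_{I\widehat{R}}(\widehat{R}), H^i_{I\widehat{R}}(\widehat{R})) = 0$, and by the identity above $\Ann_{\widehat{R}} H^i_{I\widehat{R}}(\widehat{R}) = 0$. Finally I would descend: any $r \in \Ann_R H^i_I(R)$ also kills $H^i_I(R) \otimes_R \widehat{R} \cong H^i_{I\widehat{R}}(\widehat{R})$ as an $\widehat{R}$-module, so $r$ lies in the zero ideal of $\widehat{R}$, which forces $r = 0$ in $R$ by the injectivity of $R \to \widehat{R}$.

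There is no serious obstacle; the argument is a routine faithful-flatness reduction. The only point that warrants a little care is checking that the $\fD$-module machinery underlying Theorem \ref{annhom} really applies to $\widehat{R} = K[[x_1,\dots,x_n]]$ for the particular residue field $K$ produced by Cohen's theorem rather than only for an a priori prescribed ground field, but this is precisely the setting in which that theorem is stated.
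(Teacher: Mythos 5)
Your proposal is correct and follows essentially the same route as the paper: reduce to the complete case via faithful flatness of $R\to\widehat{R}$, identify $\widehat{R}$ with a power series ring by Cohen's Structure Theorem, apply Theorem \ref{annhom}, and transfer the conclusion through the containment $\Ann_R H^i_I(R)\subseteq\Ann_R\Hom_R(H^i_I(R),H^i_I(R))$. The only cosmetic difference is that you record the identity $\Ann_R M=\Ann_R\Hom_R(M,M)$ in both directions, whereas the paper uses only the one inclusion actually needed.
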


\begin{proof}
Let $\hat{R}$ be the $\fm$-adic completion of $R$. It is known that
$$\Ann_{R}(H^{i}_{I}(R))=
\Ann_{\hat{R}}(H^{i}_{I\hat{R}}(\hat{R}))$$ and $H^i_I (R)\otimes_R
\hat{R} \cong H^i_{I\hat{R}}(\hat{R}) \neq 0$, because of the
faithfully flatness of $\hat{R}$. Then, we may assume that $R$ is
complete. Now we are done by virtue of Cohen's Theorem and Theorem
\ref{annhom}. To this end, let $f \in
\Hom_{R}(H^{i}_{I}(R),H^{i}_{I}(R)) $ be an arbitrary element and $r
\in \Ann_{R}(H^{i}_{I}(R))$ with $rz=0$, for every $z \in
H^{i}_{I}(R)$. Hence, for every $z \in H^{i}_{I}(R)$ one has
$rf(z)=f(rz)=0$ so $\Ann_{R}(H^{i}_{I}(R)) \subseteq
\Ann_{R}(\Hom_{R}(H^{i}_{I}(R),H^{i}_{I}(R)))$.
\end{proof}

\subsection{characteristic $p$}

Throughout this subsection $R$ is a commutative Noetherian ring
containing a field of positive characteristic.

\begin{prop} \label{Blick.Ffinite}
Let $(R,\fm)$ be a $F$-finite regular local ring. Then
$\Hom_{R}(H^{i}_{I}(R),H^{i}_{I}(R))$ is a unite $R[F^e]$-module.
\end{prop}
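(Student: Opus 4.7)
My plan is to transfer the unit $R[F^e]$-structure of $M := H^i_I(R)$ to its endomorphism module. First I would recall that $M$ itself carries a natural unit $R[F^e]$-structure: the \v{C}ech complex $\check{C}(R; x_1, \ldots, x_n)$ is built from localizations of $R$, each of which is a unit $R[F^e]$-module, and since the category of unit $R[F^e]$-modules is an abelian subcategory of the category of $R[F^e]$-modules (as noted just before Theorem \ref{unit}, following \cite{Bl}), the local cohomology $H^i_I(R)$ inherits the unit structure. Let $\mathcal{V}_M : F^{e*}M \to M$ denote its structural isomorphism.

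Next, I would define a candidate structural map
\[
\mathcal{V}^e : F^{e*}\Hom_R(M,M) \longrightarrow \Hom_R(M,M), \qquad r \otimes \varphi \mapsto \mathcal{V}_M \circ (r \cdot F^{e*}\varphi) \circ \mathcal{V}_M^{-1},
\]
where, for $r \in R^e$, the symbol $r \cdot F^{e*}\varphi$ denotes the $R$-linear endomorphism of $F^{e*}M$ obtained by post-composing $F^{e*}\varphi$ with left multiplication by $r$. Equivalently, one equips $\End_R(M)$ with the $p^e$-linear Frobenius action $\varphi \mapsto \mathcal{V}_M \circ F^{e*}\varphi \circ \mathcal{V}_M^{-1}$. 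A routine verification, using that $R^e = R$ is commutative as a ring together with the defining tensor relation $r \otimes am = a^{p^e} r \otimes m$, shows that $\mathcal{V}^e$ is well-defined and $R$-linear.

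The key step, and the principal anticipated obstacle, is to prove that $\mathcal{V}^e$ is an isomorphism. Factor it as
\[
F^{e*}\Hom_R(M,M) \xrightarrow{\ \beta\ } \Hom_R(F^{e*}M, F^{e*}M) \xrightarrow{\ \gamma\ } \Hom_R(M,M),
\]
with $\beta(r \otimes \varphi) = r \cdot F^{e*}\varphi$ and $\gamma(\psi) = \mathcal{V}_M \circ \psi \circ \mathcal{V}_M^{-1}$. The map $\gamma$ is an isomorphism because $\mathcal{V}_M$ is, so the problem reduces to showing $\beta$ is bijective. Here the hypotheses on $R$ enter decisively: Kunz's theorem makes $R^e$ flat as a right $R$-module, and $F$-finiteness over a Noetherian ring upgrades flat to finitely generated projective. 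The resulting tensor-Hom compatibility for this bimodule is what forces $\beta$ to be an isomorphism. This is the delicate point, since $M = H^i_I(R)$ is not a finitely generated $R$-module, so the classical statement that $F^{e*}$ commutes with $\Hom$ out of finitely presented modules does not apply directly; the cleanest route, which I would take, is to invoke Blickle's general analysis \cite[Section 4]{Bl} showing that for $F$-finite regular $R$ the category of unit $R[F^e]$-modules is closed under the natural internal-$\Hom$ construction, which then yields at once that $\Hom_R(M,M)$ is a unit $R[F^e]$-module.
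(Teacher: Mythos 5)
Your proposal is correct and, despite the extra scaffolding, ends up taking essentially the same route as the paper: establish that $H^i_I(R)$ is a unit $R[F^e]$-module via the \v{C}ech complex, then invoke Blickle's result (the paper cites \cite[Corollary 4.10]{Bl}) that the internal $\Hom$ of unit $R[F^e]$-modules over an $F$-finite regular ring is again a unit $R[F^e]$-module. Your explicit identification of the delicate point --- that $\beta$ being an isomorphism is not automatic because $H^i_I(R)$ is not finitely generated, so the closure property really must come from Blickle's analysis rather than from the standard tensor--$\Hom$ compatibility --- is exactly the issue the paper's one-line citation silently absorbs.
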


\begin{proof} As mentioned before (page 5), for a given $i$, $H^{i}_{I}(R)$ is a
unite $R[F^e]$-module. Now we are done by \cite[Corollary 4.10]{Bl}.
\end{proof}

\begin{lem} \label{endo} Let $M$ be an Artinian $R$-module ($R$ is an arbitrary ring with no restriction on its characteristic).
Then there exists an  isomorphism
$$\begin{array}{ll} \ \Phi: \Hom_R (M, M) \rightarrow \Hom_{\widehat{R}} (D(M), D(M)),
\end{array}$$
which sends every $\varphi \in \Hom_R (M, M)$ to $\Phi (\varphi):
\theta \mapsto \theta \circ \varphi$, where $\theta \in D(M)$.

Whenever $R$ is complete and $M$ is a finitely generated $R$-module,
we have again such isomorphism.
\end{lem}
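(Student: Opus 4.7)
The plan is to recognize that $\Phi(\varphi)$ is exactly the Matlis dual $D(\varphi)$ of $\varphi$: the map $D(M)\to D(M)$ sending $\theta\mapsto\theta\circ\varphi$ is precisely $\Hom_R(\varphi,E_R)$. Hence the lemma will follow from the standard fact that Matlis duality is a contravariant equivalence between the category of Artinian $R$-modules and the category of finitely generated $\widehat R$-modules, and in particular induces a bijection on Hom-sets. No content beyond this duality will be needed.

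First I would record the basic identifications. An Artinian $R$-module $M$ is automatically an $\widehat R$-module (every element is killed by some power of $\fm$), so $\Hom_R(M,M)=\Hom_{\widehat R}(M,M)$; moreover $E_R=E_{\widehat R}(\widehat R/\widehat\fm)$, which makes $D(M)$ a finitely generated $\widehat R$-module. The map $\Phi$ is then nothing but the action of the functor $D$ on morphisms, and is therefore additive and $\widehat R$-linear by the functoriality of $\Hom$.

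Next, bijectivity. For injectivity, if $\Phi(\varphi)=D(\varphi)=0$ then applying $D$ once more gives $D(D(\varphi))=0$; under the natural Matlis reflexivity isomorphism $\mu_M\colon M\cong D(D(M))$ the endomorphism $D(D(\varphi))$ is identified with $\varphi$, so $\varphi=0$. For surjectivity, given $\psi\in\End_{\widehat R}(D(M))$, I would set $\varphi:=\mu_M^{-1}\circ D(\psi)\circ\mu_M\in\End_R(M)$; naturality of $\mu$ then forces $D(\varphi)=\psi$, i.e.\ $\Phi(\varphi)=\psi$.

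The complete, finitely generated case is symmetric: $D(M)$ is then Artinian and the reflexivity $M\cong D(D(M))$ still holds, so the same proof runs verbatim with the two roles interchanged. Alternatively one may just apply the first case to the Artinian module $D(M)$ to obtain $\End_{\widehat R}(D(M))\cong\End_R(D(D(M)))\cong\End_R(M)$. I do not foresee a substantive obstacle: the whole statement is a direct corollary of Matlis duality, and the only point requiring care is keeping track of the $R$- versus $\widehat R$-linear structures, which is handled by the identifications above.
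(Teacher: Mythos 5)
Your proof is correct and rests on the same engine as the paper's, namely Matlis duality and the reflexivity isomorphism $M\cong D(D(M))$; indeed $\Phi$ is exactly the functor $D$ acting on morphisms, and your surjectivity argument (transport $\psi$ through $D$ and $\mu_M$) is essentially the one in the paper. The only real difference is in injectivity: you apply $D$ once more and invoke naturality of $\mu_M$ to conclude $\varphi=0$, i.e.\ you use full faithfulness of $D$ wholesale, whereas the paper argues by hand, picking $x$ with $\varphi(x)\neq 0$ and extending a nonzero functional on $\langle\varphi(x)\rangle$ to $M$ via the injectivity of $E(R/\fm)$ to produce $\theta$ with $\theta\circ\varphi\neq 0$. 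Your version is cleaner and avoids the paper's slightly awkward detour through the short exact sequence; the paper's version is more elementary in that it does not presuppose naturality of the biduality map. One caveat applying to both you and the paper: despite the statement's phrase ``arbitrary ring,'' everything here tacitly requires $(R,\fm)$ to be Noetherian local so that Matlis duality in the form used (Artinian $\leftrightarrow$ finitely generated over $\widehat R$, with $D(D(M))\cong M$) is available; it would be worth saying so explicitly.
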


\begin{proof} We prove for the case $M$ is an Artinian module.
For a finite module over a complete local ring the argument is the
same.
 It is clear that the homomorphism $\Phi$ is well-defined. It is enough to show that it is injective and onto.

 ($\Phi$ is injective) Let $0\neq \varphi$ so there exists a
nonzero elemnet $x \in M$ such that $\varphi(x) \neq 0$ in $M$. The
exact sequence $ \ 0 \rightarrow \left\langle  \varphi(x)
\right\rangle \rightarrow M \rightarrow
 (M/\left\langle  \varphi(x) \right\rangle):=L \rightarrow 0$
implies the following short exact sequence $ \ 0 \rightarrow  D(L)
\rightarrow D(M) \rightarrow  D(\left\langle
 \varphi(x) \right\rangle) \rightarrow 0$. It is clear that $D(\left\langle  \varphi(x) \right\rangle) \neq 0$, otherwise $D(L)
\cong D(M)$. Using Matlis duality it implies that $L \cong M$ so
$\varphi(x)=0$, that is contradiction.
   Thus, there exists a nonzero map $\theta: \left\langle
   \varphi(x) \right\rangle \rightarrow E(R/\fm)$. We may extend it to
    the nonzero map $\theta: M \rightarrow E(R/\fm)$ so $\theta \circ \varphi \neq 0$, as required.

 ($\Phi$ is onto) Let $ \psi \in \Hom_{\widehat{R}} (D(M), D(M))$.
We define $\psi$ to send $\theta \in D(M)$ to $\theta \circ
D(\psi)$, where $D(\psi) : D(D(M)) \rightarrow D(D(M))$. As $M$ is
Artinian, $D(D(M)) \cong M$ so $D(\psi): M \rightarrow M$. Then $
\Phi(D(\psi))=\theta \circ D(\psi)= \psi_{\theta}$. Thus, $
\Phi(D(\psi))=\psi$, i.e. $ \Phi$ is onto.
\end{proof}

In the following, the endomorphism ring
$\Hom_{R}(H^{i}_{I}(R),H^{i}_{I}(R))$ is a unite $R[F^e]$-module
without any need for $R$ to be $F$-finite.

\begin{prop}
Let $(R,\fm)$ be a regular local ring. Suppose that $H^{i}_{I}(R)$
is an Artinian $R$-module. Then
$\Hom_{R}(H^{i}_{I}(R),H^{i}_{I}(R))$ is a unite $R[F^e]$-module.
\end{prop}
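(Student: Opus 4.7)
The plan is to reduce to the situation treated in Proposition \ref{Blick.Ffinite} by exploiting the Artinianness of $H^i_I(R)$ to trade it, via Matlis duality, for a \emph{finitely generated} unite $R[F^e]$-module. The $F$-finiteness hypothesis in Proposition \ref{Blick.Ffinite} enters only to handle the cofinite side via \cite[Corollary 4.10]{Bl}; on the finitely generated side, Kunz's flatness of $R^e$ over the regular ring $R$ is enough, which removes the obstruction.

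First I would pass to the $\fm$-adic completion $\widehat{R}$. Since $H^i_I(R)$ is Artinian, the natural map $H^i_I(R) \to H^i_I(R)\otimes_R \widehat{R}\cong H^i_{I\widehat{R}}(\widehat{R})$ is an isomorphism, every $R$-linear endomorphism is automatically $\widehat{R}$-linear, and the Frobenius actions agree under restriction along $R\to \widehat{R}$. Hence the statement for $R$ is equivalent to the corresponding statement for the complete regular local ring $\widehat{R}$, and we may assume $R$ is complete.

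Now set $N:=\mathcal{D}(H^i_I(R))$. Since $H^i_I(R)$ is cofinite and already a unite $R[F^e]$-module, Proposition \ref{4.16-4.17}(a) gives that $N$ is a \emph{finitely generated} unite $R[F^e]$-module, and Proposition \ref{4.16-4.17}(b), together with $G(H^i_I(R))=H^i_I(R)$ (which holds because $H^i_I(R)$ is already unite), gives $\mathcal{D}(N)\cong H^i_I(R)$. Applying Lemma \ref{endo} yields an isomorphism
$$\Hom_R(H^i_I(R),H^i_I(R))\cong \Hom_R(N,N)$$
that intertwines the natural Frobenius actions on the two sides. Because $N$ is finitely presented and $R^e$ is flat over $R$ by Kunz's theorem, the base-change map
$$F^{e*}\Hom_R(N,N)\longrightarrow \Hom_R(F^{e*}N,F^{e*}N)$$
is an isomorphism; composing with the isomorphism $\mathcal{V}^e_N:F^{e*}N\xrightarrow{\sim}N$ coming from the unite structure on $N$ (applied on both source and target of Hom) produces the desired isomorphism $F^{e*}\Hom_R(N,N)\xrightarrow{\sim}\Hom_R(N,N)$, i.e., the unite $R[F^e]$-structure.

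The main obstacle is the bookkeeping of Frobenius structures: one must check that Matlis duality (Lemma \ref{endo}) and the biduality $\mathcal{D}\circ\mathcal{D}\cong \mathrm{id}$ on unite cofinite $R[F^e]$-modules preserve the $F^e$-action, so that the unite structure on $\Hom_R(N,N)$ constructed in the last step is genuinely the one induced by the $R[F^e]$-structures on $H^i_I(R)$. This is a matter of naturality of the constructions in \cite{Bl}, and once verified the conceptual point is clear: Artinianness trades the cofinite side (where $F$-finiteness is needed in \cite[Corollary 4.10]{Bl}) for the finitely generated side, where the only ingredient is the flatness of the Frobenius on a regular ring.
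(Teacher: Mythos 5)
Your proposal is correct and takes essentially the same route as the paper: both pass through Lemma \ref{endo} to trade the Artinian module $H^i_I(R)$ for its finitely generated Matlis dual, and then invoke Kunz flatness of the Frobenius to commute $F^{e*}$ past $\Hom$ on the finitely generated side. The only cosmetic difference is that the paper works directly with $D(H^i_I(R))$ and cites \cite[Lemma~4.12]{Bl} to identify $F^{e*}D(H^i_I(R))$ with $D(F^{e*}H^i_I(R))\cong D(H^i_I(R))$, whereas you set $N=\mathcal{D}(H^i_I(R))$ and cite Proposition~\ref{4.16-4.17}(a) for its unite structure; since $\mathcal{D}\cong D$ on unite cofinite modules these are the same object, so the two arguments coincide.
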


\begin{proof}
By virtue of Lemma \ref{endo} and the fact that $D(H^{i}_{I}(R))$ is
finitely generated one has
$$F^e(\Hom_{R}(H^{i}_{I}(R),H^{i}_{I}(R))) \cong
\Hom_{R}(F^e(D(H^{i}_{I}(R))),F^e(D(H^{i}_{I}(R))).$$ It follows
from \cite[Lemma 4.12]{Bl} that
$$\Hom_{R}(F^e(D(H^{i}_{I}(R))),F^e(D(H^{i}_{I}(R))) \cong
\Hom_{R}(D(F^e(H^{i}_{I}(R))),D(F^e(H^{i}_{I}(R))).$$ As
$H^{i}_{I}(R)$ is a unite $R[F^e]$-module, then
$$\Hom_{R}(D(F^e(H^{i}_{I}(R))),D(F^e(H^{i}_{I}(R))) \cong
\Hom_{R}(D(H^{i}_{I}(R)),D(H^{i}_{I}(R))).$$ Once again, using Lemma
\ref{endo} the proof is complete.
\end{proof}

Now we are ready to prove the main result of this subsection but
before it we need some preliminaries. It is noteworthy to say that
unite $R[F^e]$-modules have the structure of $\fD$-modules in the
case of characteristic $p>0$ (cf. \cite{Lyu97} and \cite{Bl}).

\begin{definition}
A reduced $F$-finite ring is said to be strongly $F$-regular, if for
all $c \in R$ not in any minimal prime, there exists $q=p^e$ such
that the map $R \rightarrow R^{1/q}$ sending $1 \mapsto c^{1/q}$
splits as an $R$-module homomorphism. Here, $R^{1/q}$ is the
over-ring of $q$th roots of elements in $R$.
\end{definition}

The strongly $F$-regular rings were introduced by Hochster and Huneke
in \cite{Ho-Hu2}. It is known that a regular, $F$-finite ring is
strongly $F$-regular.

\begin{definition}
An $F$-finite ring R has finite F-representation type if there
exists a finite set $\mathcal{S}$ of isomorphism classes of $R$-
modules such that any indecomposable $R$-module summand of
$R^{1/q}$, for any $q = p^e$, is isomorphic to some element of
$\mathcal{S}$.
\end{definition}

Note that an $F$-finite regular ring has finite $F$-representation
type, see \cite{Sm-Va} for more information.

\begin{thrm} \label{charp.endo}
Let $(R,\fm)$ be a complete ring. If $R$ is strongly $F$-regular and
has finite $F$-representation type. Suppose that
$\Hom_{R}(H^{i}_{I}(R),H^{i}_{I}(R))$ is a unite $R[F^e]$-module.
Then $\Ann_R(\Hom_{R}(H^{i}_{I}(R),H^{i}_{I}(R)))=0$.
\end{thrm}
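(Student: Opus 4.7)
The plan is to transfer the characteristic-zero argument of Theorem~\ref{annhom} to positive characteristic, replacing the simplicity of the differential operator ring used there by the $\fD_R$-module structure available on unit $R[F^e]$-modules under the present hypotheses.

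First, since $M := \Hom_R(H^i_I(R), H^i_I(R))$ is assumed to be a unit $R[F^e]$-module, the general theory recalled immediately before the statement (cf.~\cite{Lyu97}, \cite{Bl}) equips $M$ with a canonical left $\fD_R$-module structure that extends its $R$-module structure. In view of Lemma~\ref{B}, it then suffices to prove that $\Ann_{\fD_R}(M) = 0$. Note that $M$ is nonzero, since it contains the identity endomorphism of $H^i_I(R)$ (the case $H^i_I(R)=0$ being vacuous), so $1 \in \fD_R$ does not annihilate $\mathrm{id}_{H^i_I(R)}$, whence $\Ann_{\fD_R}(M)$ is a proper two-sided ideal of $\fD_R$.

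The main step is then to exploit the strongly $F$-regular and finite $F$-representation-type hypotheses in order to conclude that this proper two-sided ideal must actually vanish. Under these conditions (cf.~\cite{Bl}, \cite{Lyu97}, \cite{Sm-Va}), $R$ is a simple $\fD_R$-module and the correspondence between unit $R[F^e]$-modules and $\fD_R$-modules is tight enough to force the $\fD_R$-action on every nonzero unit $R[F^e]$-module to be faithful. Applied to the nonzero unit $R[F^e]$-module $M$, this gives $\Ann_{\fD_R}(M) = 0$, and Lemma~\ref{B} closes the argument by yielding $\Ann_R(M)=0$.

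The main obstacle is precisely the middle inference of the last paragraph: extracting the faithful-$\fD_R$-action statement from the strongly $F$-regular plus finite $F$-representation-type hypotheses. In characteristic zero this was immediate from the simplicity of $\fD_R$ as a ring, but for non-regular $R$ in characteristic $p>0$ the ring $\fD_R$ can fail to be simple, so it is the finite $F$-representation-type hypothesis (together with strong $F$-regularity) that restores enough control on $\fD_R$-submodules of unit $R[F^e]$-modules to let the characteristic-zero pattern of proof go through.
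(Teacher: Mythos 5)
Your overall skeleton matches the paper's: pass from the unit $R[F^e]$-module structure of $\Hom_R(H^i_I(R),H^i_I(R))$ to a $\fD_R$-module structure, show the $\fD_R$-annihilator vanishes, and finish with Lemma~\ref{B}. But the step you yourself single out as ``the main obstacle'' is the entire content of the theorem, and you leave it unproved. You assert that strong $F$-regularity together with finite $F$-representation type makes ``the correspondence between unit $R[F^e]$-modules and $\fD_R$-modules tight enough to force the $\fD_R$-action on every nonzero unit $R[F^e]$-module to be faithful,'' but no argument is given, and the fact you do invoke --- that $R$ is a simple $\fD_R$-module --- does not deliver it: $\fD$-simplicity of $R$ controls $\fD_R$-submodules of $R$ itself, not the annihilators of arbitrary unit $R[F^e]$-modules.

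The missing ingredient is the theorem of Smith and Van den Bergh (\cite[Theorem 4.2.1]{Sm-Va}): a strongly $F$-regular ring of finite $F$-representation type has a \emph{simple} ring of differential operators $\fD_R$. This is precisely what the two hypotheses are in the statement for. Once $\fD_R$ is known to be a simple ring, the argument is word-for-word the characteristic-zero one you were imitating: the annihilator of the nonzero $\fD_R$-module $\Hom_R(H^i_I(R),H^i_I(R))$ (nonzero because it contains the identity, as you correctly note) is a proper two-sided ideal of a simple ring, hence zero, and Lemma~\ref{B} concludes. Your worry that ``for non-regular $R$ in characteristic $p>0$ the ring $\fD_R$ can fail to be simple'' is true in general, but it is exactly the situation the hypotheses exclude; no detour through a ``tight correspondence'' between unit modules and $\fD$-modules is needed, and the one you sketch does not constitute a proof.
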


\begin{proof}
As $\Hom_{R}(H^{i}_{I}(R),H^{i}_{I}(R))$ is a unite $R[F^e]$-module,
it then follows from \cite[Proposition 3.6]{Bl} that
$\Hom_{R}(H^{i}_{I}(R),H^{i}_{I}(R))$ has a $\fD_R$-module
structure. On the other hand by \cite[Theorem 4.2.1]{Sm-Va}, $\fD_R$
is a simple ring. Then one has
$$\Ann_{\fD_R}\Hom_{R}(H^{i}_{I}(R),H^{i}_{I}(R))=0.$$ Now we are done
by Lemma \ref{B}.
\end{proof}

As a consequence of Theorem \ref{charp.endo} we can recover a result
of Huneke-Koh.

\begin{cor} \label{annloc.p}(cf. \cite[Lemma 2.2]{Hu-K})
Let $(R,\fm)$ be a regular local ring containing a field $k$ of
characteristic $p>0$. If $H^{i}_{I}(R)\neq 0$,
then $\Ann_R H^{i}_{I}(R)=0$.
\end{cor}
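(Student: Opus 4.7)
The plan is to mimic the characteristic zero proof of Corollary \ref{annloc.0}, replacing Theorem \ref{annhom} by Theorem \ref{charp.endo}. First I would reduce to the complete case: faithful flatness of $R \to \hat{R}$ gives the isomorphism $H^{i}_{I}(R) \otimes_R \hat{R} \cong H^{i}_{I\hat{R}}(\hat{R})$, so the hypothesis $H^{i}_{I}(R) \neq 0$ is preserved, and $\Ann_R H^{i}_{I}(R) = 0$ will follow once $\Ann_{\hat{R}} H^{i}_{I\hat{R}}(\hat{R}) = 0$ is established (an element $r\in R$ annihilates $H^{i}_{I}(R)$ iff its image in $\hat{R}$ annihilates $H^{i}_{I\hat{R}}(\hat{R})$).

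By Cohen's structure theorem the complete regular local ring $\hat{R}$ is isomorphic to $k[[x_1,\ldots,x_n]]$. Under the standing assumption that $k$ is $F$-finite, $\hat{R}$ is then an $F$-finite regular local ring, hence automatically strongly $F$-regular, and by \cite{Sm-Va} has finite $F$-representation type. Moreover, Proposition \ref{Blick.Ffinite} guarantees that $\Hom_{\hat{R}}(H^{i}_{I\hat{R}}(\hat{R}), H^{i}_{I\hat{R}}(\hat{R}))$ is a unite $\hat{R}[F^e]$-module. With these three ingredients every hypothesis of Theorem \ref{charp.endo} is in place.

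Applying Theorem \ref{charp.endo} yields
$$\Ann_{\hat{R}}\bigl(\Hom_{\hat{R}}(H^{i}_{I\hat{R}}(\hat{R}), H^{i}_{I\hat{R}}(\hat{R}))\bigr) = 0.$$
The conclusion is then formal and is verbatim the closing step of Corollary \ref{annloc.0}: for any $r \in \Ann_{\hat{R}} H^{i}_{I\hat{R}}(\hat{R})$, any $f$ in the endomorphism ring, and any $z \in H^{i}_{I\hat{R}}(\hat{R})$, one has $rf(z) = f(rz) = 0$, so $r$ lies in the annihilator of the endomorphism ring, forcing $r=0$. Faithful flatness of $R \to \hat{R}$ then descends this vanishing back to $R$, completing the proof.

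The main obstacle is the $F$-finiteness hypothesis on $k$, which is needed both to get the unite structure on $\Hom_{\hat{R}}(H^{i}_{I\hat{R}}(\hat{R}), H^{i}_{I\hat{R}}(\hat{R}))$ via Proposition \ref{Blick.Ffinite} and to invoke the Smith--Van den Bergh theorem. If $k$ is not $F$-finite, one has to preface the argument with a faithfully flat residue-field enlargement (e.g.\ a Bourbaki gonflement to a perfect residue field) that preserves regularity of $R$ and the nonvanishing of $H^{i}_{I}(R)$, and then apply the argument above over the enlarged ring.
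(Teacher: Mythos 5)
Your proposal is correct and follows essentially the same route as the paper: reduce to the complete case, arrange $F$-finiteness of the residue field, and then combine Proposition \ref{Blick.Ffinite} with Theorem \ref{charp.endo} and the inclusion $\Ann_R H^{i}_{I}(R)\subseteq \Ann_R\bigl(\Hom_R(H^{i}_{I}(R),H^{i}_{I}(R))\bigr)$. The only cosmetic difference is that the paper performs the Bourbaki \emph{gonflement} to an algebraically closed (hence perfect) residue field up front rather than treating the $F$-finite case first, which is precisely the remedy you describe in your final paragraph.
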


\begin{proof}
Once again similar to what we have seen in the proof of Theorem
\ref{annloc.0} we may assume that $R$ is a complete local ring. Without loss of generality, we may
assume that $k=R/\fm$. Using a suitable  \textit{gonflement} of $R$
there exists a regular local ring $(S,\fn)$ such that $S/\fn$ is the
algebraic closure of $k$ and $(S,\fn)$ is faithful flat over
$(R,\fm)$, see \cite[Chapter IX, Appendice 2]{Bou}. By faithful flatness of $S$ one has $S/\fm S=S/\fn$.
 By passing over $S$ we may assume that $k$ is an algebraically closed field.
Therefore, $k$ is a perfect field. In the light of Remark
\ref{Hart-Speis} we observe that $R$ is $F$-finite. Now the claim
follows from the Proposition \ref{Blick.Ffinite} and Theorem
\ref{charp.endo}.
\end{proof}

\proof[Acknowledgements] I am deeply grateful to the referee for
his/her careful reading of the manuscript, appropriate and constructive suggestions to improve the paper.
 This project is based on a question asked by Professor
Josep \`{A}lvarez-Montaner on the comparison of $\Fdepth$ and
$\fgrade$ when the author was visiting the Universitat
Polit\`{e}cnica de Catalunya, in September 2013. Hereby, I would
like to express my thanks to the Universitat Polit\`{e}cnica and
specially, to Josep, for their support and warm hospitality. I am
also strongly indebted with Josep for several fruitful discussions.
I would like to mentioned that when the author was finishing this
project, he started a joint project with Alberto F. Boix, and gave a
characteristic free proof of the result of Huneke-Koh and Lyubeznik
on the annihilator of local cohomology modules.

\end{document}